\theoremstyle{plain}
\newtheorem{thm}{Theorem}[section]
\newtheorem{lem}[thm]{Lemma}
\newtheorem{prop}[thm]{Proposition}
\newcommand{\argmin}{\arg\!\min}
\theoremstyle{definition}
\newtheorem{defn}{Definition}[section]
\theoremstyle{remark}
\newtheorem{rem}{\bf Remark}[section]
\theoremstyle{remark}
\newtheorem{com*}{\bf Comment}
\def \newequation#1#2{
   \@definecounter{#1}
   \@namedef{the#1}{\hbox{#2}}
   \@namedef{#1}{$$\refstepcounter{#1}}
   \@namedef{end#1}{
      \eqno \csname the#1\endcsname $$\global\@ignoretrue
      }
}
\def \newequation#1#2{
   \@definecounter{#1}
   \@namedef{the#1}{\hbox{#2}}
   \@namedef{#1}{$$\refstepcounter{#1}}
   \@namedef{end#1}{
      \eqno \csname the#1\endcsname $$\global\@ignoretrue
      }
   }
\def \newequation#1#2{
   \@definecounter{#1}
   \@namedef{the#1}{\hbox{#2}}
   \@namedef{#1}{$$\refstepcounter{#1}}
   \@namedef{end#1}{
      \eqno \csname the#1\endcsname $$\global\@ignoretrue
      }
   }
\def \newequation#1#2{
   \@definecounter{#1}
   \@namedef{the#1}{\hbox{#2}}
   \@namedef{#1}{$$\refstepcounter{#1}}
   \@namedef{end#1}{
      \eqno \csname the#1\endcsname $$\global\@ignoretrue
      }
   }
\def \newequation#1#2{
   \@definecounter{#1}
   \@namedef{the#1}{\hbox{#2}}
   \@namedef{#1}{$$\refstepcounter{#1}}
   \@namedef{end#1}{
      \eqno \csname the#1\endcsname $$\global\@ignoretrue
      }
   }
\title{Recursive  marginal quantization  of  the Euler  scheme of a diffusion process}
\author{ 
{\sc  Gilles Pag\`es} \thanks{Laboratoire de Probabilit\'es et Mod\`eles al\'eatoires (LPMA), UPMC-Sorbonne Universit\'e, UMR  CNRS 7599, case 188, 4, pl. Jussieu, F-75252 Paris Cedex 5, France.
E-mail: {\tt gilles.pages@upmc.fr} } \ \ \ 
{\sc  Abass Sagna} \thanks{ENSIIE \& Laboratoire de Math\'ematiques et Mod\'elisation d'Evry (LaMME), Universit\'e  d'Evry Val-d'Essonne,  UMR CNRS  8071,    23 Boulevard de France, 91037 Evry. E-mail: {\tt abass.sagna@ensiie.fr}. }\ \ \
\thanks{The  first  author  benefited from the support of the Chaire ``Risques financiers'',  a joint initiative of \'Ecole Polytechnique, ENPC-ParisTech and UPMC, under the aegis of the Fondation du Risque. The second author  benefited from the support of the Chaire ``Markets in Transition'', under the aegis of Louis Bachelier Laboratory, a joint initiative of \'Ecole polytechnique, Universit\'e d'\'Evry Val d'Essonne and  F\'ed\'eration Bancaire Fran\c{c}aise.} 
}
\date{}
\begin{document}

\maketitle

\begin{abstract}
We propose  a new approach to quantize the marginals of the discrete Euler diffusion  process. The method is built recursively  and involves the  conditional distribution  of the marginals of the discrete Euler process.  Analytically,  the method raises several  questions like  the analysis  of the induced quadratic quantization error between  the marginals of the  Euler process and the proposed quantizations. We  show  in particular that  at  every  discretization step $t_k$ of the Euler scheme, this error is bounded by the  cumulative   quantization errors  induced by   the Euler operator,  from times $t_0=0$  to time $t_k$.  For numerics, we restrict our analysis to  the one dimensional setting and  show how to compute the optimal grids using a Newton-Raphson  algorithm. We   then  propose a closed formula  for the  companion weights and the  transition probabilities associated to the proposed quantizations.  This allows us to quantize  in particular  diffusion processes  in  local volatility models  by  reducing  dramatically the computational complexity of the search of optimal quantizers  while increasing their computational precision with respect to the  algorithms  commonly proposed  in this framework. Numerical tests are carried out  for  the Brownian motion and for the pricing of European options in a local volatility model.  A comparison with  the Monte Carlo simulations shows that the proposed method may sometimes  be more  efficient (w.r.t.  both computational  precision and time complexity) than   the Monte Carlo method.
\end{abstract}

\section{Introduction}
Optimal quantization method  appears first in~\cite{She}  where the author studies in particular the optimal quantization problem for the uniform distribution. It has become an important field of information theory since the early $1940$'s.  A common use of quantization is the conversion of a continuous signal  into a discrete signal that assumes only a finite number of values. 
  
  Since then, optimal quantization has been   applied in many fields like in Signal Processing, in Data Analysis,  in Computer Sciences and recently in Numerical Probability following the work~\cite{Pag}. Its application to Numerical Probability relies on the possibility to discretize   a random vector $X$ taking infinitely many values by   a discrete random vector $\widehat X$ valued in a set of finite cardinality. This allows to approximate  either expectations of the form $\mathds Ef(X)$  or, more significantly, some conditional expectations  like  $\mathds E(f(X) \vert Y)$ (by quantizing both random variables $X$ and $Y$).   Optimal quantization is used to solve  problems emerging  in Quantitative Finance as   optimal stopping problems  (see~\cite{BalPag, BalPagPri}), the pricing of swing options  (see~\cite{BarBouPag}),  stochastic control problems  (see~\cite{CorPhaRun, PagPhaPri}), nonlinear filtering  problems (see e.g.~\cite{PagPha, PhaRunSel,CalSag,ProSag}), the pricing of barrier options (see~\cite{Sag}).  
  
  In  Quantitative Finance, several problems of interest amount to  estimate  quantities of the form
  \begin{equation}  \label{EqPriceEuroLikeOptionsC}
    \mathbb E \big[f( X_{T}) \big], \quad T>0, 
    \end{equation}
for a given Borel function $f: \mathbb R^d \rightarrow \mathbb R$, or involving terms like 
\begin{equation}  \label{EqNLfilterLikeOptionsC}
 \mathbb E \big[f(X_t) \vert   X_{s} = x \big] , \quad  0<s <t,
 \end{equation}
where $(X_t)_{t\in [0,T]}$  is   a stochastic diffusion process,  solution to  the stochastic differential equation  
\begin{equation} \label{EqSignalIntro}
 X_t = X_0+\int_0^t b(s,X_s) ds + \int_0^t\sigma(s,X_s) dW_s,   
 \end{equation}
where  $W$ is a  standard  $q$-dimensional  Brownian motion starting  at $0$ independent of the $\mathbb R^d$-valued random vector $X_0$, both defined on a probability space $(\Omega,{\cal A}, \mathbb P)$. The functions $b:[0,T] \times \mathbb R^d \rightarrow \mathbb R^d$ and the matrix-valued  diffusion coefficient function  $\sigma:[0,T] \times \mathbb R^d \rightarrow \mathbb R^{d \times q}$ are Borel measurable and satisfy some appropriate Lipschitz continuity  and linear growth conditions to ensure  the existence of  a strong solution of the stochastic differential equation (see~\eqref{LipEDS} and ~\eqref{BoundEDS} further on).  Since in general the solution of (\ref{EqSignalIntro}) is not explicit, we have  first to approximate the continuous paths of the process $(X_{t})_{t\in [0,T]}$ by  a discretization  scheme, typically,  the Euler  scheme. Given the  (regular) time discretization mesh  $t_k = k  \Delta$, $k=0,\ldots, n$, $\Delta = T/n$, the  "discrete time  Euler scheme"  $(\bar  X_{t_k}^n)_{k=0,\ldots,n}$, associated to  $(X_t)_{t\in [0,T]}$  is recursively  defined  by 
$$\bar X_{t_{k+1}}^n= \bar X_{t_k}^n + b(t_k,\bar X_{t_k}^n) \Delta  + \sigma(t_k,\bar X_{t_k}^n) (W_{t_{k+1}} - W_{t_k}), \quad  \bar X_0^n = X_0.$$
Then, once we have access  to the discretization scheme of the stochastic process $(X_t)_{t\in [0,T]}$,  the quantities (\ref{EqPriceEuroLikeOptionsC}) and (\ref{EqNLfilterLikeOptionsC}) can be  estimated by
  \begin{equation}  \label{EqPriceEuroLikeOptions}
    \mathbb E \big[f(\bar X_{T}^n)\big] 
    \end{equation}
and
\begin{equation}  \label{EqNLfilterLikeOptions}
 \mathbb E \big[f(\bar X_{t_{k+1}}^n) \vert  \bar X_{t_k}^n = x \big] \quad \textrm{when  } t=t_{k+1} \textrm{ and }  s=t_k. 
 \end{equation}

\begin{rem}  \label{RemConverEuler} $(a)$  Note  that, if $b$ and $\sigma$ both have linear growth in $x$ uniformly in $t\!\in [0,T]$, then any strong solution to~\eqref{EqSignalIntro}  (if any) and the Euler scheme  satisfy for every $p \!\in (0,+\infty)$,  
\begin{equation} \label{EqExpSupXtilde}
\sup_{n\ge 1}\mathbb E \Big( \max_{k=0,\ldots,n} \vert  \bar X_{t_k}^n   \vert^p \Big) + \mathbb E \Big( \sup_{t\in [0,T]} \vert  X_{t}   \vert^p \Big) < +\!\infty.
\end{equation}
Hence, the above quantities   \eqref{EqPriceEuroLikeOptionsC}, \eqref{EqNLfilterLikeOptionsC}, \eqref{EqPriceEuroLikeOptions}, \eqref{EqNLfilterLikeOptions}, are well defined as soon as $f$ has polynomial growth (when $X$ is well defined). 

\smallskip
\noindent $(b)$  When $b$ and $\sigma$ are smooth with bounded derivatives, say $C^{4}_b$, the following weak error result holds for smooth functions $f$ with polynomial growth (or bounded Borel functions under hypo-ellipticity  assumptions  on $\sigma$,  see  $e.g.$~\cite{BalTal,TalTub}),  the estimation of $  \mathbb E\big[f( X_T)\big]$ by $\mathbb E\big[f(\bar  X_T^n)\big]$ induces the following  weak  error:  
$$ 
\big\vert  \mathbb E f(X_T)   - \mathbb E f(\bar X_T^n) \big\vert  \leq \frac{C}{n}  
$$
with  $C=C_{b,  \sigma, T} >0$  and where $n$ is the number of time discretization steps.
\end{rem}
From now on, we will drop the exponent $n$ and will denote  $\bar X$  instead of $\bar X^n$. 

The estimation of quantities like  (\ref{EqPriceEuroLikeOptions}) or (\ref{EqNLfilterLikeOptions})  can  be performed by Monte Carlo simulations since the Euler scheme is simulable. Nevertheless, an alternative to the Monte Carlo method can  be  to use cubature formulas   produced by an optimal  quantization approximation method, especially  in small or medium dimension ($d \leq 4$  theoretically but, in practice, it may remain  competitive with respect to the Monte Carlo method up to dimension  $d=10$, see~\cite{PagPri}).

A quantization of an $\mathds R^d$-valued random vector $Y$ induced by a grid $\Gamma \subset \mathds R^d$   is a random vector  $\widehat Y^{\Gamma} = \pi(Y)$, where $q: \mathds R^d \rightarrow \Gamma$ is a Borel function. Optimal quantization consists in specifying both $\pi$ and $\Gamma$ to minimize $\Vert  Y - \widehat Y^{\Gamma} \Vert_r$ for a given $r \in (0,+\infty)$ (we talk about quadratic optimal quantization when $r=2$).
 
 Now, suppose that  we have  access to the optimal quantization or to some ``good'' (in a sense to be specified further on)   quantizations $\big(\widehat X_{t_k}^{\Gamma_k}\big)_{k=0,\ldots,n}$  (we will sometimes denote   $\widehat X_{t_k}$ instead of $\widehat X_{t_k}^{\Gamma_k}$ to simplify notations) of the process $(\bar X_{t_k})_k$  on the grids  $\Gamma_k:=\Gamma_k^{(N_k)} = \{x^{k}_1,\ldots,x^{k}_{N_k}  \}$ of size $N_k$ (which is also  called an $N_k$-quantizer), for $k=0,\ldots,n$. 
 
  If $X_0$ is random, then  we suppose that its distribution  can  be quantized. In this case $\widehat X_0^{\Gamma_0}$ will be the optimal quantization of $X_0$ of size $N_0$. If $X_0 =x_ 0$ is deterministic then   $\Gamma_0 = \{ x_0 \}$ and $\widehat X_0^{\Gamma_0}=x_0$. 
  
  Suppose as well  that we have access to or have computed (offline)  the associated weights $\mathbb P(\widehat X_{t_k}^{\Gamma_k}  =  x^{k}_i)$, $i=1,\ldots,N_k$,  $k=0,\ldots,n$ (which are the distributions of the $\widehat X_{t_k}^{\Gamma_k}$), and  the transition probabilities $\widehat p_k^{ij}  = \mathbb P(\widehat X_{t_{k+1}}^{\Gamma_{k+1}}  =  x^{k+1}_j \vert  \widehat X_{t_k}^{\Gamma_k}  =  x^{k}_i)$ for every $k=0,\ldots,n-1$, $i =1, \ldots,N_k$, $j=1, \ldots, N_{k+1}$ (in other words, the  conditional distributions ${\cal L}(\widehat X_{t_{k+1}}^{\Gamma_{k+1}} \vert \widehat X_{t_k}^{\Gamma_k})$).  Then,  using optimal quantization cubature formula, the expressions (\ref{EqPriceEuroLikeOptions}) and (\ref{EqNLfilterLikeOptions})    can be   estimated by 
$$  \mathbb E \big [f(\widehat X_{t_n}^{\Gamma_n} ) \big]  = \sum_{i=1}^{N_n} f(x^{n}_i)  \, \mathbb P \big(\widehat X_{t_n}^{\Gamma_n}   =  x^{N_n}_i \big),$$
since  $t_n =T$ and 
$$ \mathbb E \big [f(\widehat X_{t_{k+1}}^{\Gamma_{k+1}} ) \vert   \widehat X_{t_k}^{\Gamma_k} =  x^{k}_i   \big]  = \sum_{j=1}^{N_{k+1}} \widehat p_k^{ij} f(x^{k+1 }_j) ,$$
respectively.   The  remaining question to be solved is then  to know how to get the optimal  or at least ``good'' grids $\Gamma_k$, for  every $k=0, \ldots,n$, their associated weights and transition probabilities.   In a general framework, as soon as  the stochastic process $(\bar X_{t_k})_k$ (or the underlying diffusion process $(X_{t})_{t \geq 0}$) can be simulated   one may  use  zero search  stochastic gradient algorithm known as Competitive Learning Vector Quantization  (CLVQ), see e.g. ~\cite{PagPri}  or a randomized  fixed point procedure  (see e.g. ~\cite{GerGra,  PagPri03, PagYu})   to compute the (hopefully almost) optimal grids and their associated weights or transition probabilities.  In the special case of one dimension,  we may rely on the deterministic counterpart of these procedures like  fugy's algorithm or Lloyd method and, for few specific scalar distributions (the Normal distribution, the exponential distribution, the Weibull distribution, etc), to  the Newton-Raphson's  algorithm. In this last case, we can speak of fast quantization procedure since this deterministic algorithm leads to more precise estimations and is  dramatically  faster  than stochastic optimization methods.  As a typical example of implementation, quadratic optimal  quantization grids associated to  $d$-dimensional Normal distribution up to $d=10$ can  be downloaded  at the  website 
 
 \centerline{{\tt www.quantize.math-fi.com}. }

 To highlight  the usefulness   of our method, suppose for example  that  we want to price a Put option with a maturity $T$,  a  strike $K$ and a  present value $X_0$ in a 
 local volatility model where  the dynamics of the stock price process  evolves following the stochastic differential equation  (called Pseudo-CEV in~\cite{LemPag}):
\begin{equation}
dX_t = rX_t dt + \vartheta \frac{X_t^{\delta+1}}{\sqrt{1+X_t^2}} dW_t, \quad X_{0} = x_{0},  \quad  t\!\in [0,T],  
\end{equation}
where  $\delta\!\in (0,1)$, $\vartheta\!\in (0,\underline{\vartheta}]$, $\underline{\vartheta}>0$, and $r$ stands  for  the interest rate. In this situation the (unique strong) solution   $X_T$, at time $T$, is not  known analytically and  if we want to estimate the  quantity of interest: $ e^{-rT} \mathbb E(f( X_T))$,  where $f(x) := \max(K-x,0)$ is the (Lipschitz continuous) payoff function,  we have first of all  to discretize  the process $(X_{t})_{t\in [0,T]}$  as $(\bar X_{t_k})_{k=0,\ldots,n}$, with $t_n = T$  (using   e.g. the Euler scheme),  and   then     estimate   
$$ e^{-rT} \mathbb E(f(\bar  X_T)) $$
by optimal quantization.  The only way to produce  optimal grids and the associated weights in this situation  is to perform  stochastic procedures like the CLVQ  or  randomized Lloyd's procedure (see e.g.~\cite{GerGra, PagYu}), even in the one dimensional framework.   However, these methods may be very time consuming.   In this framework (as well as in the general local volatility model framework in dimension $d=1$), our approach allows us to quantize the diffusion process in the Pseudo-CEV model using the Newton-Raphson algorithm.  It is important to notice that the companion  weights and the probability transitions associated to the quantized process are obtained by a closed formula so that the method  involves by no means  Monte Carlo simulations.  
  On the other hand, a  comparison with  Monte Carlo simulation for  the pricing of European options in a local volatility model  also shows that the proposed method may  sometimes be  more  efficient (with respect to  both computational  precision and time complexity)  than   the Monte Carlo method.

\bigskip 
{\em  The recursive marginal quantization algorithm}.  Let us be more precise about  our proposed method in the general setting where  $(X_t)_{t\in [0,T]}$ is a solution  to   Equation~\eqref{EqSignalIntro}.  Our aim is in practice to compute the quadratic  optimal   quantizers $(\Gamma_k)_{0 \le k \le n}$ associated with the Euler scheme  $(\bar X_{t_k})_{0 \le k \le n}$. Such a sequence $(\Gamma_k)$  is defined for every $k=0,\ldots,n$ by 
  \begin{equation*} 
\Gamma_k \!\in  \argmin \{ \bar D_{k}(\Gamma), \Gamma \subset \mathbb R^d, {\rm card}(\Gamma) \leq N_{k} \}
\end{equation*}
where  the function $\bar D_k(\cdot)$ is the so-called  distortion function associated to $\bar X_{t_k}$, and is defined for every  $N_k$-quantizer $\Gamma_k$ by 
$$ \bar D_k(\Gamma_k) = \mathbb E \big \vert \bar X_{t_{k}}  -  \widehat{X}_{t_{k}}^{\Gamma_k}   \big\vert^2 = \mathbb E \big( {\rm dist}(\bar X_{t_{k}}, \Gamma_{k})^2 \big ),
$$
where ${\rm dist}(\bar X_{t_{k}}, \Gamma_{k})$ is the distance of $\bar X_{t_{k}}$ to $ \Gamma_{k}$ and $\vert  \cdot \vert$ is,  unless otherwise specified, the Euclidean norm in $\mathds R^d$.  At this step, there is no easy way to compute the distortion function  $\bar D_k(\cdot)$, for $k \ge 1$,  because of the form of the density function of $\bar X_{t_{k}}$. However,  by conditioning with respect to $\bar X_{t_{k-1}}$,  we can connect the  distortion function  $\bar D_{k} (\Gamma_{k})$ associated to  $\bar X_{t_{k}}$  with the  distribution of $\bar X_{t_{k-1}}$ by introducing  the Euler scheme operator  as follows:
     \begin{eqnarray}  \label{EqDistAsDistXtkIntro}
   \bar  D_{k}(\Gamma_{k}) 
    & = &  \mathbb E \big[   {\rm dist}({\cal E}_{k-1}(\bar X_{t_{k-1}},Z_{k}), \Gamma_{k})^2 \big]    
      \end{eqnarray}
   where $(Z_{k})_k $ is an $i.i.d.$ sequence of ${\cal N}(0;I_q)$-distributed  random vectors   independent from $\bar X_0$ and  for every $x\!\in \mathbb R^d$, the  Euler operator  ${\cal E}_{k-1}(x,Z_{k})$  is defined by 
   \[
   {\cal E}_{k-1}(x,Z_{k})  = x + \Delta b(t_{k-1},x) + \sqrt{\Delta} \sigma(t_{k-1},x) Z_{k}.
   \]
  Now, here is how we construct the algorithm. Given the distribution of $\bar X_0$, we quantize $\bar X_0$ and denote its quantization  by $\widehat X_0^{\Gamma_0}$. We want now to define the recursive  marginal  quantization of  $\bar X_{t_1}$. Owing to   Equation (\ref{EqDistAsDistXtkIntro}) and given that the previous marginal $\bar X_0$ has already be quantized,  we  replace  $\bar X_0$ by $\widehat X_0^{\Gamma_0}$, then,  we set $\widetilde X_{t_{1}}  :=  {\cal E}_0(\widehat X_{0}^{\Gamma_0},Z_{1})$  and consider the induced distortion 
  \[
   \widetilde D_{1} (\Gamma)  :=  \mathbb E \big[   {\rm dist}(\widetilde X_{t_{1}}, \Gamma)^2 \big] =  \mathbb E \big[   {\rm dist}({\cal E}_0(\widehat X_{0}^{\Gamma_0} ,Z_{1}), \Gamma)^2 \big],
  \]
  where  $\Gamma \subset \mathds R^d$ and   ${\rm card }(\Gamma) \le N_1$.  The distortion function $ \widetilde D_{1} (\cdot)$ is the one   to be optimized in order to produce the optimal $N_1$-quantizer $\Gamma_1$. Consequently, we define the recursive marginal quantization of $\bar X_{t_1}$ as the optimal quantization  $\widehat X_{t_1}^{\Gamma_1}$ of $\widetilde X_{t_1}$: $\widehat X_{t_1}^{\Gamma_1}   = {\rm Proj}_{\Gamma_1}(\widetilde X_{t_1})$, where 
 \[
 \Gamma_1 \!\in  \argmin \{ \widetilde D_{1}(\Gamma),\  \Gamma \subset \mathbb R^d, \ {\rm card}(\Gamma) \leq N_{1} \}.
 \]
 Once the  optimal $N_1$-quantizer $\Gamma_1$ is produced, we  define as previously  the recursive marginal quantization of  $\bar X_{t_2}$ as the optimal quantization $\widehat X_{t_2}^{\Gamma_2}$ of $\widetilde X_{t_1}$ where 
 \begin{eqnarray*}
 && \Gamma_2 \!\in  \argmin \{ \widetilde D_{2}(\Gamma),\  \Gamma \subset \mathbb R^d, \ {\rm card}(\Gamma) \leq N_{2} \}  \\
 &&   \widetilde D_{2}(\Gamma) =  \mathbb E \big[   {\rm dist}(\widetilde X_{t_{2}}, \Gamma)^2 \big]  \quad \textrm{ and } \quad  \widetilde X_{t_{2}}  :=  {\cal E}_1(\widehat X_{1}^{\Gamma_1},Z_{2}).
 \end{eqnarray*}
Repeating this procedure, we  define the recursive marginal quantization of $(\bar X_{t_k})_{0 \le k \le n}$ as the optimal quantizations $(\widehat X_{t_k}^{\Gamma_k})_{0 \le k \le n}$  of  the process $(\widetilde X_{t_k})_{0 \le k \le n}$: $\forall \ k \in \{0, \ldots, n \}$, $\widehat X_{t_k}^{\Gamma_k}   = {\rm Proj}_{\Gamma_k}(\widetilde X_{t_k})$, with  $\widetilde X_0 = \bar X_0$.   This leads us to consider the sequence of  recursive  marginal quantizations $(\widehat X_{t_k}^{\Gamma_k}  )_{k=0,\ldots,N}$ of  $(\bar X_{t_k}  )_{k=0,\ldots,N}$, defined from the following recursion: 
 \begin{eqnarray*}
  \widetilde X_0  &= &\bar X_0 \\
\widehat X_{t_k}^{\Gamma_k}   &= &{\rm Proj}_{\Gamma_k}(\widetilde X_{t_k})    \quad \textrm{and}\quad   \widetilde X_{t_{k+1}} = {\cal E}_k(\widehat X_{t_k}^{\Gamma_k} ,Z_{k+1}) ,\; k=0,\ldots,n-1
  \end{eqnarray*}
where $(Z_k)_{k=1,\ldots, n}$ is an  i.i.d. sequence of  ${\cal N}(0;I_q)$-distributed random vectors, independent of  $\bar X_0$.

From an analytical point of view, this approach  raises some new  challenging   problems among which the estimation of the  quadratic error bound  $\Vert \bar X_{t_k}  - \widehat X_{t_k}^{\Gamma_k}  \Vert_{_2} : = \big(\mathbb E\vert  \bar X_{t_k} - \widehat X_{t_k}^{\Gamma_k}  \vert_{_2}\big)^{1/2}$, for every $k=0, \ldots,n$.  
    We  will  show in particular that for any sequence $(\widehat X_{t_k}^{\Gamma_k})_{0 \le k \le n}$ of (quadratic) optimal quantization  of  $(\widetilde X_{t_k})_{0 \le k \le n}$, the quantization error  $\Vert \bar X_{t_{k}} -  \widehat X_{t_{k}}^{\Gamma_k}  \Vert_{_2}$, at the step $k$ of the recursion,   is bounded by the cumulative quantization errors  $\Vert \widetilde X_{t_{i}} -  \widehat X_{t_{i}}^{\Gamma_i}  \Vert_{_2}$, for $i=0, \cdots,k$.  Owing to  the non-asymptotic bound for the quantization errors $\Vert \widetilde X_{t_{i}} -  \widehat X_{t_{i}}^{\Gamma_i}  \Vert_{_2}$,  known as Pierce's Lemma (which will be recalled further on in Section  \ref{SecOptiQuant}) we precisely show 
that   for  every $k=0, \ldots, n$, for   any $\eta\!\in ]0,1]$,
 \begin{equation*} 
  \Vert  \bar X_k -  \widehat X_k^{\Gamma_k} \Vert_{_2}  \le    \sum_{\ell=0}^{k}  a_{\ell}  N_{\ell} ^{-1/d},
 \end{equation*}
where $a_{\ell}$
 is a positive real constant depending on $b$, $\sigma$, $\Delta$, $x_0$, $\eta$ (see Theorem~\ref{TheoConvergenceRate} further on).

The paper is organized as follows. We recall first some basic facts about  (regular) optimal quantization in Section~\ref{SecOptiQuant}.   The marginal quantization method is described  in Section~\ref{SectMQ}. We give in this section the induced quantization error. In section~\ref{SectionComMarQuan}, we   illustrate  how to get the optimal grids using a deterministic zero search  Newton-Raphson's algorithm  and  show how to estimate the associated weights and transition probabilities.  The last section, Section~\ref{SectNum}, is devoted to  numerical examples. We first compare the recursive marginal quantization of $W_1$ with its regular marginal quantization (see Section~\ref{SecOptiQuant}), where $(W_t)_{t\!\in [0,1]}$ stands for the Brownian motion.   Secondly, we use the marginal quantization method for   the pricing of an European  Put option in a local volatility model (as well as in the Black-Scholes model) and   compare the results  with those obtained from  the Monte Carlo method.  

\bigskip 
\noindent {\sc Notations}.  We denote by ${\cal M}(d,q,\mathbb R)$ the set of $d \times q$ real-valued matrices.  If $A =[a_{ij}]\!\in {\cal M}(d,q,\mathbb R)$,  $A^{\star}$ denotes its  transpose  and we define the norm   $\Vert  A \Vert := \sqrt{{\rm Tr}(AA^{\star})} = (\sum_{i,j} a_{ij}^2)^{1/2}$, where ${\rm Tr}(M)$ stands for the trace of $M$, for $M\!\in {\cal M}(d,d,\mathbb R)$. For every $g:\mathbb R^d \to {\cal M}(d,q,\mathbb R)$,  we will set $[g]_{\rm Lip}= \sup_{x\neq y}\frac{\Vert g(x)-g(y) \Vert}{\vert x-y \vert}$.  For $x, y\!\in \mathbb R$, $x \vee y  = \max(x,y) $. For $x\!\in \mathbb R^d$ and $E \subset  \mathbb R^d$, ${\rm dist}(x,E)=\!\inf_{\xi\!\in \mathbb R^d} |x-\xi|$ (distance of $x$ to $E$ with respect to the current norm $|\,.\, |$ on $\mathbb R^d$).

\section{Background  on optimal quantization}  \label{SecOptiQuant}
 Let $(\Omega,\mathcal{A},\mathbb{P})$ be a  probability space and  let   $X : (\Omega,\mathcal{A},\mathbb{P}) \longrightarrow  \mathbb{R}^{d} $ be a random variable with  distribution $\mathbb{P}_X$. Assume $\mathbb R^d$ is equipped with a norm $|\,.\,|$. Assume $X\!\in L^r(\mathbb P)$ $i.e.$ ${\Vert X \Vert}_r := {\left(\mathbb E \vert X \vert ^r \right)}^{1/r}<+\infty$ for a given $r\!\in (0,+\infty)$ ($\mathbb E$ denotes the expectation with respect to $\mathbb P$).
 
  The {\em $L^r$-optimal quantization  problem at level  $N$ } for   the random vector $X$ (or, in fact, its distribution $\mathbb P_X$)  consists in finding  the best approximation of  $X$   in $L^r(\mathbb P)$ by a function  $\pi(X)$ of  $X$  where $\pi :\mathbb R^d\to \mathbb R^d$ is a Borel function  taking  at most  $N$ values. 
%
 In formalized terms, this amounts out to solve  the  minimization problem: 
$$ 
e_{N,r}(X) =\!\inf{\big\{ \Vert X - \pi(X) \Vert_{r}, \pi: \mathbb{R}^d \rightarrow  \Gamma, \Gamma \subset \mathbb{R}^d,  \vert \Gamma \vert \leq N \big\}},
$$
where $\vert A \vert $ stands for the cardinality of a subset  $A \subset \mathbb R^d$.  Note that $e_{N,r}(X) $ only depends on the distribution $\mathbb P_{X}$ of $X$.

Any  $\Gamma = \{x_1,\ldots,x_N  \}\subset \mathbb{R}^d$ of size $N$  and   any Borel function $\pi :\mathbb R^d\to  \Gamma$ are both often called  {\em $N$-quantizer}. The terminology   {\em grid}  (or a codebook in coding theory) of size $N$  is more specifically used for $\Gamma$ itself. The resulting  random vector $\pi(X)$ is called an {\em $N$-quantization} of $X$. 

Such a grid $\Gamma$ induces   Voronoi diagrams or partitions  $(C_i(\Gamma))_{ i=1,\ldots,N}$ of $\mathbb{R}^d$ defined as Borel partitions of  $\mathbb R^d$  satisfying 
$$ 
\forall\, i\!\in \{1,\cdots,N\},\quad  C_i(\Gamma) \subset  \big\{ x\!\in \mathbb{R}^d : \vert x-x_i \vert = \min_{j=1,\ldots,N}\vert x-x_j \vert \big\}.
$$ 
To such a Voronoi partition is attached a nearest neighbor projection $\pi_{\Gamma} =\sum_{i=1}^N x_i \mbox{\bf{1}}_{C_i(\Gamma)}$ and a {\em Voronoi $\Gamma$-valued quantization} of $X$ denoted $ \widehat{X}^{\Gamma} = \pi_{\Gamma}(X)$ reading:
$$
 \widehat{X}^{\Gamma} = \sum_{i=1}^N  x_i  \mbox{\bf{1}}_{\{X\!\in C_i(\Gamma)\}} . 
$$ 
Then, for any Borel function $\pi: \mathbb{R}^d \rightarrow \Gamma$ and any (Borel) nearest neighbor   projection $\pi_{\Gamma}$, 
$$ 
\vert X -\pi(X) \vert \geq \min_{i=1,\ldots,N} {\rm dist}(X,x_i) = {\rm dist}(X,\Gamma)=\vert X -\pi_{\Gamma}(X) \vert=  \vert X - \widehat{X}^{\Gamma} \vert, 
$$  
so that the optimal  $L^r$-mean quantization error  $e_{N,r}(X) $ reads
\begin{equation}
 e_{N,r}(X)  = \inf\big\{ \Vert X - \widehat{X}^{\Gamma} \Vert_r, \Gamma \subset \mathbb{R}^d,  \vert \Gamma \vert  \leq N \big\} 
= \!\inf_{ \substack{\Gamma  \subset \mathbb{R}^d \\ \vert \Gamma \vert  \leq N}} \left(\int_{\mathbb{R}^d} {\rm dist}(\xi,\Gamma)^r \mathbb P_X(d\xi) \right)^{1/r}. \label{er.quant}
 \end{equation}
 Let us recall the following basic facts:
 
\smallskip
-- If $|\,.\,|$ is an Euclidean norm, the boundary of the Voronoi are contained in finitely many hyperplanes so that, if the distribution of $X$ assigns no mass to hyperplanes, two $\Gamma$-valued Voronoi quantizations of $X$ are $\mathbb P$-$a.s.$ equal  since they only differ on the boundaries of the Voronoi diagram.

\smallskip
-- for every  $N \geq 1$, the infimum in~\eqref{er.quant} is a minimum since it is attained  at  least  at one  quantization grid $\Gamma^{N, \star}$ of size at most $N$. Any  resulting  $\Gamma^{N, \star}$  is called an  {\em $L^r$-optimal  quantizer at level $N$}. 

-- If    $ \vert {\rm supp}(\mathbb P_X))  \vert \geq N$ then  any $L^r$-mean  optimal  quantizer at level $N$  has exactly full size   $N$ (see~\cite{GraLus} or~\cite{Pag}). 

\smallskip
-- The    $L^r$-mean quantization error $e_{N,r}(X)$ decreases to zero as the level  $N$ goes to infinity and its sharp rate of convergence is ruled by the so-called Zador Theorem.  There  is also   a non-asymptotic universal upper bound known as   Pierce's Lemma. Both  results hold for any norm on $\mathds R^d$ and are  recalled   below. This second result will allow us  to put a finishing  touch to the  proof of our  main theoretical result, stated in  Theorem~\ref{TheoConvergenceRate}.

 \begin{thm} \label{thm:Zador} $(a)$ {\bf Zador Theorem}, (see~\cite{GraLus}).  Let $r>0$. Let $X$ be an $\mathbb R^d$-valued random  vector  such that  $\mathbb{E}\vert X \vert^{r+ \eta} < +\!\infty \textrm{ for some } \eta >0 $ and  let $\mathbb P_X= f \cdot \lambda_d + P_s$ be the   decomposition of $\mathbb P_X$ with respect to the Lebesgue measure $\lambda_d$ where  $P_s$ denotes its singular   part. Then 
 \begin{equation}
  \lim_{N \rightarrow +\infty} N^{\frac{1}{d}} e_{N,r}(X) = \widetilde Q_r(\mathbb P_X)
  \end{equation}
where 
$$ 
\widetilde Q_r(\mathbb P_X) = \widetilde J_{r,d} \left(\!\int_{\mathbb{R}^d} f^{\frac{d}{d+r}} d\lambda_d \right)^{\frac{1}{r} +\frac{1}{d}} = \widetilde J_{r,d} \ \Vert f \Vert_{\frac{d}{d+r}}^{1/r} \!\in [0,+\infty), 
$$
 $$ 
 \widetilde J_{r,d} =\!\inf_{N \geq 1} N^{ \frac{1}{d}} e_{N,r} \big(U([0,1]^d)\big)\!\in (0,+\infty)
$$
and  $ U([0,1]^d) $ denotes the uniform distribution over the hypercube $[0,1]^d$.  

\medskip
\noindent $(b)$ {\bf Pierce Lemma} (see~\cite{GraLus, LusPag}).   Let $r,\, \eta>0$. There exists  a universal constant  $K_{r,d,\eta}$  such that  for every   random vector $X:(\Omega,{\cal A}, \mathbb P) \rightarrow \mathbb R^d$,  
 \begin{equation}  \label{LemPierce} 
\inf_{\vert  \Gamma  \vert  \leq N} \Vert   X   - \widehat X^{\Gamma}  \Vert_{_r}   \le  K_{r, d,\eta} \, \sigma_{r+\eta}(X) N^{-\frac{1}{d}}, 
 \end{equation} 
 where $ \sigma_{p}(X)$ is the $L^p$-pseudo-standard deviation of $X$ defined by 
 $$  
 \sigma_{p}(X) =\!\inf_{\zeta\in \mathbb R^d} \Vert X-\zeta  \Vert_{_{p}} \le +\infty.
 $$
\end{thm}

For more details on the universal contant $K_{2,d,\eta}$, we refer to~\cite{LusPag}. We will call $\widetilde Q_r(\mathbb P_X)$ the Zador's constant associated to $X$.  

From a Numerical Probability viewpoint, finding an optimal $N$-quantizer  $\Gamma$ may be a challenging task even in  the quadratic canonical Euclidean case   $i.e.$ when $r=2$ and $|\,.\,|$ is the canonical Euclidean norm on $\mathbb R^d$. This framework is  the case of interest for    numerical applications. The starting point for numerical applications is to note that  if we have access to  a ``good" quantization   $\widehat X^{\Gamma}$  close enough to $X$ in distribution, then,  for every continuous  function $f: \mathbb R^d \rightarrow  \mathbb R$, we can approximate $\mathbb E f(X)$ (when finite) by the cubature formula:  
\begin{equation} \label{QuantProcedureEstim}
\mathbb{E}  f \big(\widehat{X}^{\Gamma} \big) = \sum_{i =1}^N  p_i f(x_i)
\end{equation}
where $p_i =\mathbb{P}( \widehat{X}^{\Gamma} = x_i )$, $i=1,\ldots,N$.  By {\em access}  we mean to have numerical values for the grid $\Gamma$ and its {\em weights} $p_i$, $i=1, \ldots,N$, or equivalently to the distribution of $\widehat X^{\Gamma}$.

Among all good quantizers, the so-called {\em stationary} quantizers defined below play an important role because they provide higher order cubature formulas (see below) and are also the only class of quantizers that can be reasonably computed owing to its connection with the  critical point  of the quadratic distortion also defined below.   

\begin{defn} A quantizer  $\Gamma = \{ x_1,\ldots,x_N \}$  of size $N$ inducing the Voronoi quantization $\widehat{X}^{\Gamma}$  of $X$  is  stationary  if  
\begin{eqnarray}
(i)&& \hskip -0,5cm   \mathbb{P}\left(X\!\in \cup_{i} \partial  C_i(\Gamma) \right) =0 \hskip 8cm 
\nonumber\\
(ii)&&  \hskip -0,5cm \mathbb{E} \big(X  \vert  \widehat{X}^{\Gamma} \big) =\widehat{X}^{\Gamma} \; \mathbb P\mbox{-}a.s.\label{EqStation}
\end{eqnarray}
 \end{defn}
 Equation $(ii)$ can be re-written as 
 \[
 x_i = \mathbb E \big(X\,|\,X\!\in C_i(\Gamma) \big) = \frac{\mathbb E(X\mbox{\bf 1}_{\{X\!\in C_i(\Gamma)\}})}{\mathbb P(X\!\in C_i(\Gamma))},\; i=1,\ldots,N,
 \]
 with the convention that  the equality is always true when $\mathbb P(X\!\in C_i(\Gamma))=0$.
 
 This notion of stationarity is closely related to the critical point of the so-called {\em quadratic distortion} function defined on $(\mathbb R^d)^N$  by  
\begin{equation}\label{EqDistor}
 D_{N,2} (x)  =  \mathbb E \big(\min_{1\le i\le N}|X-x_i|^2\big)= \int_{\mathbb R^d}|\xi-x_i|^2\mathbb P_{X}(d\xi), \quad x=(x_1,\ldots,x_{_N})\!\in (\mathbb R^d)^N.
  \end{equation}
  
This function is clearly symmetric.  Its connection with the quadratic mean quantization errors is as follows: set $\Gamma= \{x_i, \; i=1,\ldots,N\}$ whose cardinality is at most $N$. Then, with obvious notations, 
  \[
  D_{N,2} (x)= \mathbb E\big({\rm dist}(X, \Gamma)^2\big) = \int_{\mathbb{R}^d} {\rm dist}(\xi,\Gamma)^2 \mathbb P_X(d\xi)=\sum_{a\in \Gamma} \int_{C_a(\Gamma)} \vert \xi - a\vert^2 \mathbb P_X(d\xi).
  \]

As any grid of size at most $N$ can be ``represented'' by some $N$-tuples (by repeating, if necessary, some of its  elements), we deduce straightforwardly that 
 \[
 e_{N,2}^2(X) =\!\inf _{(x_1,\ldots,x_N)\in (\mathbb R^d)^N} D_{N,2}(x_1,\ldots,x_N).
 \]
 
 The function $D_{N,2}$ is continuous but also differentiable at any $N$-tuple having pairwise distinct components with a $\mathbb P$-negigible Voronoi partition boundary. 
The following proposition makes this more precise. 
 \begin{prop} \label{PropDifferentiability}(see~\cite{GraLus, Pag})
 $(a)$ The function $D_{N,2}$ is differentiable at any $N$-tuple $(x_1,\ldots,x_{_N}) \!\in (\mathbb R^{d})^N$ having pairwise distinct components   and such that $\mathbb{P}\left(X\!\in \cup_{i} \partial  C_i(\Gamma) \right)=0$. Furthermore, we have
 \begin{eqnarray}
  \nabla D_{N,2}(x_1,\ldots,x_{_N})  &= & 2 \Big(   \!\int_{C_i(\Gamma ) }  (x_i - x ) d \mathbb P_X (x)  \Big)_{i=1,\ldots,N}\\
  &=& 2 \Big(\mathbb P(X\!\in C_i(\Gamma))x_i-\mathbb E(X\mbox{\bf 1}_{\{X\!\in C_i(\Gamma)\}}) \Big)_{i=1,\ldots,N}.
 \end{eqnarray}
  
  \smallskip
  \noindent $(b)$ A grid $\Gamma= \{x_1,\ldots,x_{_N}\}$ of full size $N$ is stationary if and only if 
  \begin{equation}\label{eq11}
  \mathbb{P}\left(X\!\in \cup_{i} \partial  C_i(\Gamma) \right) =0 \quad \mbox{ and }\quad    \nabla D_{N,2} (\Gamma) = 0.
\end{equation}

\smallskip 
  \noindent   $(c)$  If the support of $\mathbb P_{_X}$ has at least $N$ elements, any $L^2$-optimal quantizer at level $N$ has full size and a $\mathbb P$-negligible Voronoi boundary. Hence it is a  stationary $N$-quantizer.  
\end{prop}
 
 \noindent {\sc Convention:} To alleviate notations, we will often put grids of all size $N$ as an argument of the distortion function $D_{2,N}$ as well as for its gradient when its Voronoi boundary is negligible.  This has already been done implicitly in the introduction. 
 
 \medskip

When approximating  $\mathbb E f(X)$ by $\mathbb E f(\widehat X^{\Gamma})$ and $f$ is smooth enough (say ${\cal C}^{1+\alpha}$) and  $\Gamma$ is a  stationary $N$-quantizer, the resulting  error may be bounded by the quantization error $\| X - \widehat X^{\Gamma} \|_2$. We briefly  recall some error bounds induced from the approximation of  $\mathbb Ef(X)$ by  (\ref{QuantProcedureEstim}) (we refer to~\cite{PagPri} for further details).  
\begin{itemize}
\item[(a)]  Let $\Gamma$ be a stationary quantizer    and let  $f$ be a Borel function on $\mathbb R^d$.   If $f$ is  a convex function then
\begin{equation}  \label{EqIneqConv}
\mathbb{E} f(  \widehat{X}^{\Gamma}) \leq \mathbb{E} f(X).
\end{equation}

\item[(b)]  Lipschitz  continuous functions: 
\begin{itemize}

\item  If $f$ is Lipschitz continuous then  for any  $N$-quantizer $\Gamma$ we have
$$
\big \vert \mathbb{E}  f(X) - \mathbb{E}  f( \widehat{X}^{\Gamma})   \big \vert \leq [f]_{{\rm Lip}} \Vert X-   \widehat{X}^{\Gamma} \Vert_{_2}.
$$
\item Let $\theta: \mathbb R^d\rightarrow \mathbb{R}_{+}$ be a nonnegative convex function such that $\theta(X)\!\in L^{2}(\mathbb{P})$. If $f$ is locally Lipschitz with at most $\theta$-growth, i.e. $\vert  f(x)-f(y) \vert \leq [f]_{{\rm Lip}} \vert x-y \vert (\theta(x)+\theta(y))$ then $f(X)\in L^{1}(\mathbb{P})$ and
$$
\big \vert \mathbb{E}  f(X) - \mathbb{E} f( \widehat{X}^{\Gamma})  \big \vert \leq 2 [f]_{{\rm Lip}} \Vert X- \widehat{X}^{\Gamma} \Vert_{_2} \Vert \theta(X) \Vert_{_2}.
$$
\end{itemize}

\item[(c)] Differentiable functions:  if $f$ is differentiable on $\mathbb R^d$ with an $\alpha$-H\"older gradient  $\nabla f$ ($\alpha\!\in [0,1]$), then for any stationary $N$-quantizer $\Gamma$,
$$
\big \vert \mathbb{E}  f(X)  - \mathbb{E} f(\widehat{X}^{\Gamma} ) \big \vert \leq [\nabla  f]_{\alpha, {\rm Hol}} \Vert X-  \widehat{X}^{\Gamma} \Vert_{_2}^{1+\alpha}.
$$
\end{itemize}


\section{Recursive marginal quantization of the Euler process}  \label{SectMQ}
Let $(X_t)_{t \geq 0}$ be a stochastic process taking values in a  $d$-dimensional Euclidean space $\mathbb R^d$ and  solution to  the stochastic differential equation: 
\begin{equation} \label{EqSignalProcess}
 dX_t = b(t,X_t) dt + \sigma(t,X_t) dW_t,   \quad   X_0  \!\in \mathbb R^d, 
 \end{equation}
where  $W$ is a  standard  $q$-dimensional  Brownian motion starting  at $0$ and where   $b:[0,T] \times \mathbb R^d \rightarrow  \mathbb R^d$ and the matrix diffusion coefficient function  $\sigma:[0,T] \times \mathbb R^d \rightarrow  {\cal M}(d,q, \mathbb R) $ are measurable and satisfy 
\begin{equation}\label{BoundEDS}
b(.,0) \; \mbox{ and } \;\sigma(.,0) \;\mbox{ are bounded on } \; [0,T]
\end{equation}
and the uniform  global Lipschitz continuity assumption : for every $t\!\in [0,T]$ and every $x\!\in  \mathbb R^d$,
\begin{equation}   \label{LipEDS} 
 \vert b(t,x) - b(t,y) \vert    \le  [b]_{\rm Lip} \vert  x-y  \vert  \quad \mbox{ and }\quad  \Vert  \sigma(t,x) - \sigma(t,y)\Vert  \leq  [\sigma]_{\rm Lip} \vert  x-y  \vert 
.
\end{equation}  
In particular, this implies that 
 \begin{equation}  \label{EqLinGrowth}
\vert b(t,x)   \vert  \leq L (1 +\vert x \vert)   \  \textrm{  and  } \  \Vert \sigma(t,x)   \Vert  \leq  L  (1 +\vert x \vert) 
 \end{equation} 
with  $L= \max\big( [b]_{\rm Lip}, \|b(.,0)\|_{\sup},  [\sigma]_{\rm Lip}, \|\sigma(.,0)\|_{\sup}\big)$. These  assumptions  guarantee the existence of  a unique strong solution of (\ref{EqSignalProcess}) starting from any $x_0\!\in \mathbb R^d$. 

\subsection{The algorithm}

Consider the Euler scheme of the process $(X_t)_{t \geq 0}$ starting from $\bar X_0= X_0$:
$$
\bar X_{t_{k+1}} = \bar X_{t_k} + \Delta  b(t_k,\bar X_{t_k})  + \sigma(t_k,\bar X_{t_k}) (W_{t_{k+1}} - W_{t_k}),
$$
where $t_k =\frac{k T}{n}=k\Delta$ for every $k\!\in \{ 0, \ldots,n \}$. 
%

\bigskip 
\noindent {\sc Notation simplification}.  To alleviate   notations, we  set 
\begin{eqnarray*}
 & &  Y_k:= Y_{t_k}   \textrm{   (for any process $Y$ evaluated at time } t_k ) \\
 &  & b_k(x):= b(t_k,x) , \quad x\!\in \mathbb R^d\\
 & &  \sigma_k(x) := \sigma(t_k,x), \quad x\!\in \mathbb R^d. 
\end{eqnarray*}

 Recall that the distortion function  $\bar D_{k}$ associated to $\bar  X_{k}$  may be written   for every $k=0, \ldots,n-1$, as
   \[
   \bar  D_{k+1}(\Gamma_{k+1}) =  \mathbb E \big[   {\rm dist}({\cal E}_k(\bar X_k,Z_{k+1}), \Gamma_{k+1})^2 \big]  \label {EqDistorNonQuant}
  \]
  where 
  \[
  {\cal E}_k(x,z) := x+\Delta b(t_k,x)+\sqrt{\Delta} \sigma(t_k,x) z,\;  x\!\in \mathbb R^d, \; z \!\in \mathbb R^q.
  \]
Supposing that  $\bar  X_0$ has already been quantized  by  $\widehat X_0^{\Gamma_0}$ and setting $\widetilde X_{1}  =  {\cal E}_0(\widehat X_0^{\Gamma_0} ,Z_{1})$, we may approximate the distortion function $\bar D_{1}(\Gamma_1)$ by
     \begin{eqnarray*}
  \widetilde D_{1} (\Gamma_{1}) & := &   \mathbb E \big[   {\rm dist}(\widetilde X_{1}, \Gamma_{1})^2 \big] \\
  & = & \mathbb E \big[   {\rm dist}({\cal E}_0(\widehat X_0^{\Gamma_0} ,Z_{1}), \Gamma_{1})^2 \big] \\ 
  &  = &  \sum_{i=1}^{N_{0}}  \mathbb E \big[   {\rm dist}({\cal E}_0(x_i^{0},Z_{1}), \Gamma_{1})^2 \big] \mathbb P\big(\widehat X_0^{\Gamma_0} =x_i^{0} \big). 
 \end{eqnarray*}
 
 This allows us (as already said in the introduction) to consider the sequence of recursive (marginal) quantizations $(\widehat X_k^{\Gamma_k})_{k=0,\ldots,n}$ defined from the following recursion: 
 \begin{equation}\label{eq:hattilde-k}
\widehat X_k ^{\Gamma_k} = {\rm Proj}_{\Gamma_k}(\widetilde X_k)  \quad \textrm{and} \quad   \widetilde X_{k}  =  {\cal E}_k(\widehat X_{k-1}^{\Gamma_k},Z_{k}),\; k=1,\ldots,n,\;    \widetilde X_0  = \bar X_0.
 \end{equation}
where $(Z_k)_{k=1,\ldots, n}$ is an   i.i.d., sequence of ${\cal N}(0;I_q)$-distributed random vectors, independent of  $\bar X_0$.

 \subsection{The error analysis}
 
Our aim is now to compute the quantization error bound  $\Vert  \bar X_T - \widehat X_T \Vert_{_2} : = \Vert  \bar X_n - \widehat X_n^{\Gamma_n} \Vert_{_2}$.  The analysis of this error bound  will  be the subject of the following  theorem,  which is the main result of the paper.   In this section, we assume for convenience that 
$\widetilde X_0=X_0=x_0 $ (which amounts to conditioning with respect to $\sigma(X_0)$ since $W$ and $X_0$ are independent).
 
 \begin{thm}  \label{TheoConvergenceRate}  Let the coefficients $b$, $\sigma$    satisfy  the assumptions~\eqref{BoundEDS} and~\eqref{LipEDS}.
 Let for every $k=0,\ldots,n$,   $\Gamma_k$ be a quadratic optimal quantizer for  $\widetilde X_k$ at level  $N_k$. Then,  for  every $k=0, \ldots, n$, for   any $\eta\!\in (0,1]$,
 \begin{equation}  \label{EqTheoConvergenceRate}
  \Vert  \bar X_k -  \widehat X_k^{\Gamma_k} \Vert_{_2}  \le  K_{2,d,\eta}  \sum_{\ell=1}^{k}  a_{\ell}(b, \sigma,t_k, \Delta,x_0 ,L,2+\eta)   N_{\ell} ^{-1/d}
 \end{equation}
  where $K_{2,d,\eta}$ is a universal constant  defined in  Equation~\eqref{LemPierce} and, for every $p\!\in (2,3]$,
  \[
 a_{\ell}(b, \sigma,t_k, \Delta,x_0 ,L,p) :=  e^{C_{b,\sigma}  \frac{(t_k-t_{\ell})}{p} }  \Big[  e^{(\kappa_p  + K_p) t_{\ell}}   \vert  x_{0} \vert^p   +   \frac{e^{\kappa_p  \Delta }L + K_p}{\kappa_p +K_p}  \big(e^{(\kappa_p + K_p) t_{\ell}} -1 \big) \Big]^{\frac{1}{p}}
  \]
with    $\,C_{b,\sigma}=[b]_{\rm Lip} + \frac{1}{2} [\sigma]_{\rm Lip}^2$,  
 $ \kappa_p := \frac{(p-1)(p-2)}{2 } + 2 p L $ and  $\displaystyle K_p := 2^{p-1}L{}^p \Big( 1 + p + \Delta^{\frac{p}{2}-1} \Big)   \mathbb E \vert  Z \vert^p$,  $Z \sim {\cal N}(0;I_q)$.
 \end{thm}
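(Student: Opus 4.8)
The plan is a triangle-inequality decomposition that, at each step, separates a genuine quantization error from the way the Euler dynamics transports the quantization errors already committed, closed by an $L^{2+\eta}$ moment control of the pre-quantization variables $\widetilde X_\ell$ feeding Pierce's Lemma \eqref{LemPierce}. First I would write, using $\widehat X_k^{\Gamma_k} = {\rm Proj}_{\Gamma_k}(\widetilde X_k)$,
\[
\Vert \bar X_k - \widehat X_k^{\Gamma_k}\Vert_{_2} \le \Vert \bar X_k - \widetilde X_k\Vert_{_2} + \Vert \widetilde X_k - \widehat X_k^{\Gamma_k}\Vert_{_2},
\]
the second summand being the quadratic quantization error of $\widetilde X_k$ on $\Gamma_k$. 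For the first summand I would use $\bar X_k - \widetilde X_k = \mathcal E_{k-1}(\bar X_{k-1},Z_k) - \mathcal E_{k-1}(\widehat X_{k-1}^{\Gamma_{k-1}},Z_k)$: conditioning on $\sigma(Z_1,\dots,Z_{k-1})$ (which carries both $\bar X_{k-1}$ and $\widehat X_{k-1}^{\Gamma_{k-1}}$), expanding the square, and using that $Z_k$ is centred and independent of the past kills the drift--diffusion cross term, so that \eqref{LipAssb}--\eqref{LipAssSig} give $\Vert \bar X_k - \widetilde X_k\Vert_{_2}^2 \le \big(1 + 2C_{b,\sigma}\Delta + [b]_{\rm Lip}^2\Delta^2\big)\Vert \bar X_{k-1}-\widehat X_{k-1}^{\Gamma_{k-1}}\Vert_{_2}^2 \le e^{2C_{b,\sigma}\Delta}\Vert \bar X_{k-1}-\widehat X_{k-1}^{\Gamma_{k-1}}\Vert_{_2}^2$.

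Writing $e_k := \Vert \bar X_k - \widehat X_k^{\Gamma_k}\Vert_{_2}$, $q_\ell := \Vert \widetilde X_\ell - \widehat X_\ell^{\Gamma_\ell}\Vert_{_2}$ and recalling $\widetilde X_0 = \bar X_0$, the two displays combine into $e_k \le e^{C_{b,\sigma}\Delta}e_{k-1} + q_k$; a discrete Gronwall iteration then yields $e_k \le \sum_{\ell=0}^{k} e^{C_{b,\sigma}(t_k-t_\ell)} q_\ell$. Pierce's Lemma \eqref{LemPierce}, applied to $\widetilde X_\ell$ with the given $\eta$ and with $\zeta = 0$, gives $q_\ell \le K_{2,d,\eta}\Vert \widetilde X_\ell\Vert_{_{2+\eta}} N_\ell^{-1/d}$, so that only the moments $\Vert \widetilde X_\ell\Vert_{_{2+\eta}}$ remain to be controlled.

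That moment estimate is the analytic heart. With $p = 2+\eta \in (2,3]$ and $M_k := \mathbb E|\widetilde X_k|^p$, I would combine: (i) that an optimal quantizer is stationary, so by \eqref{EqStation} and conditional Jensen $\mathbb E|\widehat X_k^{\Gamma_k}|^p \le M_k$; and (ii) a second-order Taylor expansion of $y \mapsto |y|^p$ around $\widehat X_k^{\Gamma_k}$ with increment $\Delta b_k(\widehat X_k^{\Gamma_k}) + \sqrt{\Delta}\,\sigma_k(\widehat X_k^{\Gamma_k})Z_{k+1}$ --- here $p \le 3$ is used precisely so that the Hessian of $|y|^p$ grows at most like $|y|^{p-2}$ with $p-2\le 1$, whence $(|a|+|h|)^{p-2}\le |a|^{p-2}+|h|^{p-2}$. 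Taking conditional expectation, the first-order term collapses to $p\Delta|\widehat X_k^{\Gamma_k}|^{p-2}\langle \widehat X_k^{\Gamma_k}, b_k(\widehat X_k^{\Gamma_k})\rangle$ and, with the remainder, is estimated through the linear growth \eqref{EqLinGrowth} and the Gaussian moments $\mathbb E|Z|^p$ (the contribution $\mathbb E|\Delta b_k + \sqrt{\Delta}\,\sigma_k Z|^p$ producing the $\Delta^{p/2-1}$ in $K_p$). After rearranging, a recursion of the form $M_{k+1} \le \big(1 + (\kappa_p + K_p)\Delta\big)M_k + (e^{\kappa_p\Delta}L + K_p)\Delta$ emerges, and a discrete Gronwall argument from $M_0 = |x_0|^p$ delivers exactly the bracketed quantity of the statement, i.e. $\Vert \widetilde X_\ell\Vert_{_p}\le[\,\cdots\,]^{1/p}$.

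Plugging this into $e_k \le K_{2,d,\eta}\sum_{\ell=0}^k e^{C_{b,\sigma}(t_k-t_\ell)}\Vert \widetilde X_\ell\Vert_{_{2+\eta}} N_\ell^{-1/d}$ and collecting the transport factor with the $p$-th root of the moment bound into $a_\ell(b,\sigma,t_k,\Delta,x_0,L,2+\eta)$ gives \eqref{EqTheoConvergenceRate}. I expect the main obstacle to be the third paragraph: the one-step recursion for $M_k$ has to come with sharp explicit constants, which forces the delicate Taylor bookkeeping, genuinely uses $p\in(2,3]$, and relies on the stationarity of optimal quantizers to replace $\mathbb E|\widehat X_k^{\Gamma_k}|^p$ by $M_k$ --- this is exactly why the hypothesis demands optimal (hence stationary), not merely ``good'', quantizers. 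The remaining ingredients form a routine triangle-inequality-plus-Gronwall scheme, the only delicate point there being the measurability and independence bookkeeping in the one-step error propagation.
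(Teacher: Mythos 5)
Your proposal follows essentially the same route as the paper: the triangle-inequality split, the $L^2$-Lipschitz property of the Euler operator (with the cross term killed by centring of $Z$), the discrete Gronwall iteration, Pierce's Lemma with $\zeta=0$, and the $L^{2+\eta}$ moment bound for $\widetilde X_\ell$ proved via stationarity plus conditional Jensen and a Taylor expansion of $|\cdot|^p$ exploiting $p-2\in(0,1]$, exactly as in the paper's Lemma \ref{PropPrinc}. The only (harmless) deviation is bookkeeping: the paper absorbs the drift into the base point $a=x+\Delta b_k(x)$ and expands only in the Gaussian increment, then uses Young's inequality, whereas you expand around $\widehat X_k^{\Gamma_k}$ with the full increment, which yields the same type of one-step recursion and constants of the announced form.
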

 
Let us make the following remarks.
 
\begin{rem}  
 \noindent  It is crucial  for applications   to notice that the real constants $a_{\ell}(\cdot,t_k,\cdot,\cdot,\cdot,\cdot,p)$ do not explode when $n$ goes to  infinity and we have  
 \[
 \sup_{n  \ge 1} \, \max_{0 \le \ell \le k \le n}  a_{\ell}(\cdot,t_k,\cdot,\cdot,\cdot,\cdot,p)  \le e^{C_{b,\sigma} \frac{T}{p} }  \Big[  e^{(\kappa_p  + K_p^{\star}) T}   \vert  x_{0} \vert^p   +   \frac{e^{\kappa_p  T } L + K_p^{\star}}{\kappa_p }  \big(e^{(\kappa_p + K_p^{\star}) T} -1 \big) \Big]^{\frac{1}{p}},
 \] 
where  $K_p^{\star} =  2^{p-1}L{}^p \Big( 1 + p + T^{\frac{p}{2}-1} \Big)$.   We also remark  that  $\kappa_p \le 2(1+p)L$.  
 \end{rem}
 
 The proof of the theorem relies on the Lemma   below, which  proof is  postponed to the appendix.
 
 \begin{lem}    \label{PropPrinc} Let the coefficients $b$, $\sigma$ of the diffusion satisfy the assumptions~\eqref{BoundEDS} and~\eqref{LipEDS}. Then, for every $p\!\in (2,3]$, for every $k=0, \ldots,n$,
  \begin{equation}  \label{EqPropPrinc}
  \mathbb E\vert \widetilde X_{k}\vert^{p} \le   e^{(\kappa_p  + K_p) t_k}   \vert  x_{0} \vert^p   +      \frac{e^{\kappa_p  \Delta } L + K_p}{\kappa_p + K_p } \big( e^{(\kappa_p + K_p) t_k}-1\big),
  \end{equation} 
  where $K_p$ and $\kappa_p$ are  defined in Theorem~\ref{TheoConvergenceRate}.
 \end{lem}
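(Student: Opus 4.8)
The plan is to establish the moment bound \eqref{EqPropPrinc} by induction on $k$, exploiting the recursive structure $\widetilde X_{k+1} = {\cal E}_k(\widehat X_k^{\Gamma_k}, Z_{k+1})$ together with the fact that $\widehat X_k^{\Gamma_k}$ is a (stationary, hence $L^2$-contracting) quantization of $\widetilde X_k$. First I would write $\widetilde X_{k+1} = \widehat X_k^{\Gamma_k} + \Delta b_k(\widehat X_k^{\Gamma_k}) + \sqrt\Delta\,\sigma_k(\widehat X_k^{\Gamma_k}) Z_{k+1}$, condition on $\widehat X_k^{\Gamma_k}$ (which is independent of $Z_{k+1}$), and take $p$-th moments. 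The idea is to split the increment into the ``drift part'' $A := \widehat X_k^{\Gamma_k} + \Delta b_k(\widehat X_k^{\Gamma_k})$ and the ``diffusion part'' $B := \sqrt\Delta\,\sigma_k(\widehat X_k^{\Gamma_k})Z_{k+1}$, and use an inequality of the form $|A+B|^p \le |A|^p + p|A|^{p-1}\langle \widehat A, B\rangle + c_p(|A|^{p-2}|B|^2 + |B|^p)$ valid for $p\in(2,3]$ — this is the standard refinement of the elementary convexity bound that keeps the cross term, which vanishes in conditional expectation since $Z_{k+1}$ is centered. Using the linear growth \eqref{EqLinGrowth} on $b$ and $\sigma$ one controls $|A| \le (1+\Delta L)|\widehat X_k^{\Gamma_k}| + \Delta L$, $|B|^2 \le \Delta L^2(1+|\widehat X_k^{\Gamma_k}|)^2|Z_{k+1}|^2$, and $|B|^p \le \Delta^{p/2}L^p(1+|\widehat X_k^{\Gamma_k}|)^p|Z_{k+1}|^p$; expanding and collecting terms (with $\mathbb E|Z|^2 = d$, $\mathbb E|Z|^p$ finite) yields a recursion of the form $\mathbb E|\widetilde X_{k+1}|^p \le (1 + (\kappa_p+K_p)\Delta)\,\mathbb E|\widehat X_k^{\Gamma_k}|^p + (\text{const})\Delta$, where the precise bookkeeping produces exactly the constants $\kappa_p$ and $K_p$ of the statement.

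The next step is to replace $\mathbb E|\widehat X_k^{\Gamma_k}|^p$ by $\mathbb E|\widetilde X_k|^p$. Since $\Gamma_k$ is a quadratic optimal (in particular stationary) quantizer for $\widetilde X_k$, we have $\mathbb E(\widetilde X_k \mid \widehat X_k^{\Gamma_k}) = \widehat X_k^{\Gamma_k}$, and by the conditional Jensen inequality applied to the convex function $x\mapsto |x|^p$, $\mathbb E|\widehat X_k^{\Gamma_k}|^p = \mathbb E|\mathbb E(\widetilde X_k\mid\widehat X_k^{\Gamma_k})|^p \le \mathbb E|\widetilde X_k|^p$ — this is precisely inequality \eqref{EqIneqConv}. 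Plugging this in turns the recursion into $u_{k+1} \le (1+(\kappa_p+K_p)\Delta)u_k + c\,\Delta$ with $u_k := \mathbb E|\widetilde X_k|^p$ and $u_0 = |x_0|^p$. A discrete Gr\"onwall / geometric-sum argument then gives $u_k \le (1+(\kappa_p+K_p)\Delta)^k|x_0|^p + c\,\Delta\sum_{j=0}^{k-1}(1+(\kappa_p+K_p)\Delta)^j$, and using $1+x \le e^x$ together with $(1+(\kappa_p+K_p)\Delta)^k \le e^{(\kappa_p+K_p)t_k}$ and summing the geometric series yields the stated closed form with the factor $e^{\kappa_p\Delta}L + K_p$ over $\kappa_p+K_p$; the extra $e^{\kappa_p\Delta}$ appears because the constant driving term at step $j$ is itself multiplied by a one-step growth factor in the derivation.

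The main obstacle I anticipate is the bookkeeping in the first step: one must choose the right $p$-th power inequality (valid only for $p\le 3$, which is why the statement restricts to $p\in(2,3]$ — for such $p$ the term $|A|^{p-2}|B|^2$ behaves well and $|A|^{p-1}$ is subadditive enough), handle the cross term carefully so it truly vanishes after conditioning, and then expand $(1+|\widehat X_k^{\Gamma_k}|)^2$ and $(1+|\widehat X_k^{\Gamma_k}|)^p$ and regroup so that the coefficient of $\mathbb E|\widehat X_k^{\Gamma_k}|^p$ is exactly $1+\kappa_p\Delta+K_p\Delta$ and not merely $O(\Delta)$. Getting the constants to match the precise expressions for $\kappa_p$ and $K_p$ — in particular the $\frac{(p+1)(p-2)}{2}$ term, which comes from bounding $\Delta|A|^{p-2}\cdot\Delta^{?}$ cross contributions and from Young's inequality applied to mixed terms like $|\widehat X_k^{\Gamma_k}|^{p-1}$ against lower powers — is the delicate part; everything else (the Jensen step and the discrete Gr\"onwall) is routine. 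A minor point to be careful about: the inequality must also cover the base step $k=0$, where $\widetilde X_0 = x_0$ is deterministic and $\mathbb E|\widetilde X_0|^p = |x_0|^p$, which matches the right-hand side of \eqref{EqPropPrinc} at $k=0$ since the second summand vanishes.
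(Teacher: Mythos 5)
Your proposal is correct and follows essentially the same route as the paper: the same Taylor-type inequality $|a+u|^p\le |a|^p+p|a|^{p-2}(a\,|\,u)+\tfrac{p(p+1)}{2}(|a|^{p-2}|u|^2+|u|^p)$ for $p\in(2,3]$ with the cross term killed by centering of $Z_{k+1}$, Young's inequality on the mixed term, linear growth bounds on $b,\sigma$, the stationarity-plus-Jensen step $\mathbb E|\widehat X_k^{\Gamma_k}|^p\le\mathbb E|\widetilde X_k|^p$, and a discrete Gr\"onwall/geometric-sum conclusion. The only differences are cosmetic (the paper writes the one-step coefficient as $e^{\kappa_p\Delta}+K_p\Delta$ rather than $1+(\kappa_p+K_p)\Delta$, using $1+u\le e^u$), so the bookkeeping you flag as delicate is exactly what the paper carries out in its Steps 1--3.
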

 
 Let us prove the theorem.
\begin{proof}[{\bf Proof } (of Theorem~\ref{TheoConvergenceRate})]
First we note that for every $k=0,\ldots,n$,
\begin{equation} \label{EqDecompError}
\Vert  \bar X_k - \widehat X_k  \Vert_{_2}  \le \Vert   \bar  X_k - \widetilde X_k  \Vert_{_2}  + \Vert  \widetilde X_k - \widehat X_k^{\Gamma_k}  \Vert_{_2} .
\end{equation}
Let us control the first term of the right hand side of the above equation.  To this end, we first note that, for every $k=0, \ldots,n$,   the function ${\cal E}_k(\cdot,Z_{k+1})$ is Lipschitz  w.r.t. the  $L^2$-norm:  in fact, for every $x, x'\!\in \mathbb R^d$,
   \begin{eqnarray*}   
  \mathds E \vert {\cal E}_k(x, Z_{k+1})  -  {\cal E}_k(x',Z_{k+1})  \vert ^2 &\le&  \big(1+ \Delta \big(2[b_k]_{\rm Lip} +[\sigma_k]^2_{\rm Lip} \big)+ \Delta^2  [b_k(.)]_{\rm Lip}^2  \big) \vert x-x' \vert^2 \\
   &\le& \big(1+ \Delta \big(2[b]_{\rm Lip} +[\sigma]^2_{\rm Lip} \big)+ \Delta^2  [b]_{\rm Lip}^2  \big) \vert x-x' \vert ^2\\
   &\le & (1+\Delta C_{b,\sigma})^2 \vert x-x' \vert^2\\
   &\le& e^{ 2 \Delta C_{b,\sigma}} \vert x-x' \vert^2,
    \end{eqnarray*} 
where  $C_{b,\sigma} = [b]_{\rm Lip} + \frac{1}{2} [\sigma]_{\rm Lip}^2$ does not depend on $n$. Then, it follows  that  for every $\ell =0, \ldots,k-1$,
\begin{eqnarray}  \label{EqUse}
  \Vert \bar X_{\ell+1} - \widetilde X_{\ell+1} \Vert_{_2} & = &   \Vert {\cal E}_{\ell}(\bar X_{\ell}, Z_{\ell+1})  -  {\cal E}_{\ell}(\widehat X_{\ell}^ {\Gamma_{\ell}},Z_{\ell+1})  \Vert_{_2} \nonumber \\
   &  \le &    e^{  \Delta C_{b,\sigma}    }   \Vert \bar X_{\ell} - \widehat X_{\ell}^{\Gamma_{\ell}} \Vert_{_2} \nonumber \\
 & \le &  e^{   \Delta C_{b,\sigma}    }   \Vert  \bar X_{\ell} -  \widetilde X_{\ell} \Vert_{_2} +    e^{   \Delta C_{b,\sigma}    }   \Vert  \widetilde X_{\ell} -  \widehat X_{\ell}^{\Gamma_{\ell}}   \Vert_{_2}. 
 \end{eqnarray}
Then, we show by a backward induction using~\eqref{EqDecompError} and~\eqref{EqUse}  that    (recall that  $\widetilde X_0 = \widehat X_0^{\Gamma_0} = x_0$)
\[  
\Vert  \bar X_k -  \widetilde X_k \Vert_{_2} \le  \sum_{\ell=1}^{k}  e^{ (k-\ell) \Delta  C_{b,\sigma} }  \Vert  \widetilde X_{\ell} -  \widehat X_{\ell}^{\Gamma_{\ell}} \Vert_{_2}.
\]
Now, we deduce from Pierce's Lemma (Equation~\ref{LemPierce} in Theorem~\ref{thm:Zador}$(b)$) and     Lemma~\ref{PropPrinc}  that,   for every $k=0, \ldots, n$,  for any $\eta \in ]0,1]$,   
\begin{eqnarray*}
 \Vert  \bar X_k -  \widehat X_k \Vert_{_2} & \le &  K_{2,d,\eta}  \sum_{\ell=1}^{k}  e^{ (k-\ell) \Delta  C_{b,\sigma} }    \sigma_{2,\eta} (\widetilde X_{\ell}) N_{\ell} ^{-1/d}  \\
 & \le  & K_{2,d,\eta}   \sum_{\ell=1}^{k}  e^{ (k-\ell) \Delta  C_{b,\sigma} }    \Vert \widetilde X_{\ell}  \Vert_{_{2+\eta}}  N_{\ell}^{-1/d} \\ 
 & \le & K_{2,d,\eta}  \sum_{\ell=1}^{k}  a_{\ell}(b, \sigma,t_k, \Delta,x_0 ,L,2+\eta)     N_{\ell} ^{-1/d},
  \end{eqnarray*}
 which is the announced result.  
\end{proof}

\medskip
\noindent {\sc Practitioner's corner}. \label{RemChoiceGridSizes}
$(a)$ {\em Optimal dispatching (to minimize the error bound).}   When we consider the upper bound of  Equation~\eqref{EqTheoConvergenceRate}, a natural question   is to  determine  how to  dispatch optimally the sizes $N_1, \cdots,N_n$ (for a fixed mesh of  length  $n$)  of the quantization grids  when we wish to use a total ``budget'' $N=N_1+ \cdots + N_n$  of elementary quantizers (with $N_k \geq 1$, for every $k=1, \ldots,n$).  Keep in mind that since $X_0$ is not random, $N_0=1$. The dispatching  problem    amounts to solving the minimization problem 
\[
\min_{N_1+\cdots+N_n = N}    \sum_{\ell=1}^{n}  a_{\ell}  N_{\ell} ^{-1/d}
\] 
where $a_{\ell}= a_{\ell}(b, \sigma,t_n, \Delta,x_0 ,L,2+\eta)$. This leads (see e.g.~\cite{BalPag}) to the  following optimal dispatching:   for every $\ell =1, \ldots, n$,
\[
N_{\ell} = \left\lfloor \frac{a_{\ell}^{ \frac{d}{d+1}}}{\sum_{k=1}^n a_{k}^{ \frac{d}{d+1}} } N \right \rfloor\vee 1,
\] 
so that  Equation~\eqref{EqTheoConvergenceRate} becomes  at the terminal instant $n$:
\begin{equation}  \label{Upper1}
  \Vert  \bar X_n -  \widehat X_n^{\Gamma_n} \Vert_{_2}  \lesssim  K_{2,d,\eta} N^{-1/d} \Big( \sum_{\ell=1}^n a_{\ell}^{ \frac{d}{d+1}}  \Big)^{1+1/d}.
\end{equation}


\noindent $(b)$  {\em Uniform dispatching (complexity minimization).} Notice that the complexity  of  the quantization tree $(\Gamma_k)_{k=0,\ldots,N}$ for the recursive marginal quantization is of order $\sum_{k=0}^{n-1} N_k N_{k+1}$. Now,   assuming  that $N_0=1$ and that for every  $N_k \leq N_{k+1}$, $k=0,\ldots,n-1$,  we want to solve (heuristically)  the problem
\begin{equation}  \label{EqMinOrig}
\min \Big\{\sum_{k=0}^{n-1} N_k N_{k+1}  \ \textrm{ subject to } \ \ \sum_{k=1}^{n} N_k =N  \Big\}.
\end{equation}
As  $\sum_{k=0}^{n-1} N_k^2  \le \sum_{k=0}^{n-1} N_k N_{k+1} \le  \sum_{k=1}^{n} N_k^2$ 
and  $N_0 =1$, this suggests that  $\sum_{k=0}^{n-1} N_k N_{k+1} \approx  \sum_{k=1}^{n} N_k^2$.  Then, if we switch to 
\begin{equation}  \label{EqMinHeur}
\min \Big\{\sum_{k=1}^{n} N_k^2 /N^2 \ \textrm{ subject to } \ \ \sum_{k=1}^{n} N_k =N  \Big\} = \min \Big\{\sum_{k=1}^{n} q_k ^2 \ \textrm{ subject to } \ \ \sum_{k=1}^{n} q_k =1  \Big\},
\end{equation}
where $q_k = N_k/N$, it is well known that the solution of this problem is given by  $q_k  = 1/n$, $i.e.$, $N_k = N/n$, for every $k=1, \ldots,n$. Plugging  this
in ~\eqref{EqMinOrig},  leads to a sub-optimal, but nearly optimal,  complexity equal to $N^2/n$. In fact, any other choice  leads to the global complexity
\[
N^2\sum_{k=0}^{n-1} q_k q_{k+1}   \ge N^2    \sum_{k=0}^{n-1} q_k^2  \ge N^2 \min \Big\{\sum_{k=0}^{n} q_k ^2   \ \textrm{ subject to }  \ \sum_{k=0}^{n} q_k =1  \Big\}    > \frac{N^2}{n}
\]
(provided $q_0$ is left free).

\smallskip
\noindent $(c)$ {\em Comparison.} If we  consider the  uniform dispatching  $ N_{\ell}  = \bar N := N/n$, $\ell=1,\ldots,n$, the error bound in Theorem~\ref{TheoConvergenceRate} becomes, still at at the terminal instant,   
\begin{equation}\label{Upper2}
  \Vert  \bar X_n -  \widehat X_n^{\Gamma_n} \Vert_{_2}  \lesssim  K_{2,d,\eta}  \Big(\frac{n}{N} \Big)^{1/d} \sum_{\ell=1}^n a_{\ell}.
\end{equation}
It is clear that the upper bound~\eqref{Upper1} is a sharper bound than~\eqref{Upper2} since,  by H\"older's Inequality, 
$$
\Big( \sum_{\ell=1}^n   a_{\ell}^{ \frac{d}{d+1}}  \Big)^{1+1/d} <   n^{1/d} \sum_{\ell=1}^n a_{\ell}.
$$
However, the uniform dispatching has smaller  complexity  than the optimal one.

\section{Computation of the marginal quantizers}  \label{SectionComMarQuan}

We focus now on the numerical   computation of  the quadratic optimal quantizers of the marginal random variable $\widetilde X_{t_{k+1}}$ given the probability  distribution function of $\widetilde X_{t_k}$. Such a task  requires the use of some algorithms like the CLVQ  algorithm,  the randomized Lloyd's algorithms  (both  requiring the computation of the gradient of the distortion function) or  Newton-Raphson's algorithm (especially for the one-dimensional setting) which involves  the gradient and the Hessian matrix of the distortion (we refer to~\cite{PagPri} for more details).      To ensure the existence of  densities (or conditional densities) of  the $\widetilde X_{t_{k+1}}$'s (given $\widetilde X_{t_k}$), we suppose that  the matrix $\sigma \sigma^{\star}(t,x)$ is invertible for every $(t,x)  \in [0,T] {\small \times} \mathds R^d$.

  
  For every $k\!\in\{0, \ldots,n\}$, let $\widehat X_k^{\Gamma}$ be  the quantization of $\widetilde X_k$ induced by   a grid $\Gamma\subset \mathbb R^d$. Recall that  the distortion function at level $N_k\!\in \mathbb N$ attached to $\widetilde X_k$  is defined on $(\mathbb R^d)^{N_k}$ by
  $$
  \widetilde D_{k}(x) = \mathbb E  \min_{1\le i\le N_k} \vert \widetilde X_k -x_i\vert ^2 = \int\min_{1\le i\le N_k} |\xi-x_i\vert^2\mathbb P_{\widetilde X_k}(d\xi),\quad  x=(x_1,\ldots,x_{_{N_k}})\!\in (\mathbb R^d)^{N_k}
  $$
 having  in mind that, if  $\Gamma_x= \{x_i, i=1,\ldots,N_k\}$,  then $  \widetilde D_{k}(x)= \Vert  \widetilde X_{k}  - \widehat X_k ^{\Gamma_x} \Vert^2_{_2}$.

  Our aim is to compute the  (at least locally) optimal quadratic  quantization grids  $(\Gamma_k)_{k=0, \ldots,n}$ associated with the random vectors $\widetilde X_k$, $k=0,\ldots,n$. Such a sequence of grids   is defined for every $k=0,\ldots,n$ by 
 \begin{equation} \label{EqOptQuant}
\Gamma_k \!\in  \argmin\big\{ \|\widehat X_k^{\Gamma}-\widetilde X_k\|_2, \Gamma \subset \mathbb R^d, {\rm card}(\Gamma) \leq N_{k}\big \}=  \argmin \big\{ \widetilde D_{k}(x),\, x\!\in  (\mathbb R^d)^{N_k}\big \}
\end{equation}
where the sequence of  (marginal) quantizations $(\widetilde X_k)_{k=0,\ldots,N}$  is recursively defined by~\eqref{eq:hattilde-k}: 
\[
 \widetilde X_{k}  =  {\cal E}_k(\widehat X_{k-1},Z_{k}),\;     \widehat X_{k-1}  = {\rm Proj}_{\Gamma_{k-1}}(\widetilde X_{k-1}), \; k=1,\ldots,n,   \;  \widetilde X_0  = \bar X_0 ,
\]
where $(Z_k)_{k=1,\ldots,n}$ is an i.i.d. sequence  of   ${\cal N} (0;I_q)$-distributed random vectors,  independent of $ \bar X_0$.

By observing the above recursion and the optimization problem~\eqref{EqOptQuant}, we see that the optimal grids $\Gamma_k$ are defined {\em recursively} (which is not the case for regular marginal quantization developed in former works). At this point the interesting fact which motivates the whole approach is that the distortion function at time $k+1$, $\widetilde D_{k+1}$, $k=0,\ldots,n-1$, can in turn be  written recursively from the grid $\Gamma_k$ already optimized at time $k$ and the distortion function of Normal distribution. 

\smallskip Let $D^{m,\Sigma}_N$ be the distortion function of the ${\cal N}\big(m;\Sigma\big)$-distribution defined on $(\mathbb R^d)^{N}$ by
\[
D^{m,\Sigma}_N(x)  = \mathbb E \min_{1\le i\le N}|\Sigma Z+m-x_i|^2.
\]
As $\widetilde X_k$ has already been (optimally) quantized by a grid $\Gamma_k=\{x^k_1,\ldots, x^k_{N_k}\}$ to which are attached the weights 
\[
p^k_i =\mathbb P(\widehat X_k =x_i^{k}),\; i=1,\ldots, N_k,
\]
 we derive from~\eqref{eq:hattilde-k} that, for every $x^{k+1}\!\in (\mathbb R^d)^{N_{k+1}}$,  
     \begin{eqnarray*}
  \widetilde D_{k+1} (x^{k+1}) & = &  \mathbb E \big[   {\rm dist}({\cal E}_k(\widehat X_k,Z_{k+1}), x^{k+1})^2 \big] \\
  &  = &  \sum_{i=1}^{N_{k}}  \mathbb E \big[   {\rm dist}({\cal E}_k(x_i^{k},Z_{k+1}),x^{k+1})^2 \big] \mathbb P(\widehat X_k =x_i^{k})\\
  &=&  \sum_{i=1}^{N_{k}} p^k_i D_{N_{k+1}}^{m_i^k,\Sigma^k_i}(x^{k+1})
 \end{eqnarray*}
since      ${\cal E}_k(x_i^{k},Z_{k+1}) \stackrel{d}{=} {\cal N}\big(m^k_i;\Sigma^k_i\big)$ with
\[
m^k_i = x_i^k +\Delta b(t_k,x_i^k) \quad \mbox{ and }\quad \Sigma^k_i = \sqrt{\Delta} \sigma(t_k,x^k_i) ,\; k=0,\ldots,n.
\]
Then, owing to Proposition~\ref{PropDifferentiability},   the distortion  function  $\widetilde D_{k+1} $   is continuously differentiable as a function of the $N_{k+1}$-tuple $x^{k+1}$ with pairwise distinct components. This follows from the fact that  the  distortion functions $D^{m, \Sigma}_{N_{k+1}}$ are differentiable  at such a   $N_{k+1}$-tuple  as soon as $\Sigma\Sigma^*$ is positive and definite.

Its gradient is given by 
\begin{eqnarray}
 \nonumber  \nabla  \widetilde D_{k+1}(x^{k+1}) 
   & = & 2 \Big(  \sum_{i=1}^{N_k} p^k_i   \frac{\partial D_{N_{k+1}}^{m^k_i,\Sigma^k_i} (x^{k+1})}{\partial x^{k+1}_j}\Big)_{j=1,\ldots,N_{k+1}}\\
  \label{EqStation}  &=&    2 \Bigg(   \mathbb E \left[  \sum_{i=1}^{N_{k}} p^k_i \Big(   \mbox{\bf{1}}_{\{ {\cal E}_k(x^{k}_i,Z_{k+1})  \in C_j(\Gamma_{k+1})  \}}  \big( x^{k+1}_j   - {\cal E}_k(x^{k}_i,Z_{k+1})  \big) \right] 
     \Bigg)_{j=1,\ldots,N_{k+1}} .
\end{eqnarray}

\begin{rem}  If  $\Gamma_{k+1}$ is a quadratic optimal $N_{k+1}$-quantizer for $\widetilde X_{k+1}$ and  if $\widehat X^{\Gamma_{k+1}}_{k+1}$ denotes   the  quantization of $\widetilde X_{k+1}$ by  the grid $\Gamma_{k+1}$, then  $\nabla \widetilde D_{k+1}(\Gamma_{k+1}) = 0$ and  $\Gamma_{k+1}$ is a stationary quantizer, for $\widetilde X_{k+1}$ $i.e.$  $ \mathbb E\Big(\widetilde X_{k+1} \big \vert  \widehat X_{k+1} \Big)  = \widehat X_{k+1}  $ or equivalently $\Gamma_{k+1}= \{x^{k+1}_i, \, i=1,\ldots,N_{k+1}\}$ with 
\begin{eqnarray}  
 x^{k+1}_{j}  
 & = & \frac{ \sum_{i=1}^{N_k}  p^k_i \,\mathbb E \Big( {\cal E}_k(x_i^{k},Z_{k+1})  \mbox{\bf 1}_{\{ {\cal E}_k(x_i^{k},Z_{k+1}) \in C_j(\Gamma_{k+1}) \} }\Big)      }{p^{k+1}_j}  \label{EqLloyd1}
\\
 \mbox{and } \qquad p^{k+1}_j&=&   \sum_{i=1}^{k} p^k_i \, \mathbb P \big(   {\cal E}_k(x_i^{k},Z_{k+1})  \in C_j(\Gamma_{k+1})   \big)  ,\; j=1,\ldots, N_{k+1}. \label{EqLloyd2} 
   \label{EqLloyd2}
 \end{eqnarray}
\end{rem}

\noindent {\sc Application to the computation of the grids}.   It follows from the stationarity Equations~\eqref{EqStation} on the one hand and \eqref{EqLloyd1}  and~(\ref{EqLloyd2}) on the other hand that one can compute by a {\em zero search} stochastic gradient descent  (known as CLVQ) or a randomized {\em fixed point}  Lloyd algorithm time in a  step by time step forward induction. In both cases, we are not only interested in the grids but by the whole distribution of $\widetilde X_k$ so we need to implement a companion procedure to compute the weights $p^k_i$. Put in a different way, this simply means that the distribution of the random vectors $\widetilde X_{k}$ can be simulated recursively. 

However, in view of our applications, we need much more than these distributions: though the sequence $(\widehat X_k)_{k=0,\ldots,n}$ is not a Markov chain we  need all  its transitions ${\cal L}\big(\widehat X_{k+1}\,|\, \widehat X_k\big) $, $k=0,\ldots,n-1$, namely  
\begin{eqnarray}
\nonumber p^k_{ij}& = & \mathbb P\big(\widehat X_{k+1}\!\in C_j(\Gamma_{k+1})\vert \widehat X_{k} \!\in C_i(\Gamma_{k})\big) = \mathbb P\big(\widetilde X_{k+1}\!\in C_j(\Gamma_{k+1})\vert \widetilde X_{k} \!\in C_i(\Gamma_{k})\big)\\
\label{transitions} &=&   \mathbb P \big(   {\cal E}_k(x_i^{k},Z_{k+1})  \in C_j(\Gamma_{k+1})   \big), \;\quad i\!\in \{1,\ldots,N_k\},\; j\!\in \{1,\ldots,N_{k+1}\}.
 \end{eqnarray}

 Looking back at~\eqref{EqLloyd2}, it is clear that, whatever the adopted method is,  these quantities are  to be computed in the above procedure to have access to $p^{k+1}_j$ (this is but Bayes' formula).
 
 \medskip This optimization phase which is crucial for applications can become time consuming as the dimension $d$ grows since it relies {\em in fine} on Monte Carlo simulations based on nearest neighbor searches. In fact when $d$ is larger than $3$ or $4$ a variant should be devised to make the procedure reasonably fast (see~\cite{PagSagMQ2}). In fact all the above formulas can be expressed as $d$-dimensional integrals over (convex) Voronoi cells.  In dimension $1$, $2$ or possibly $3$ it can be still possible to compute some of such  integrals (see Qhull website: {\tt www.qhull.org}).  
%
%
%

\smallskip In the section below, we detail a $1$-dimensional version which turns out to be extremely fast in practice, since it take advantage of the  Newton-Raphson algorithm to perform a  zero search procedure for $\nabla \widetilde D_{k}$.


\subsection{The one dimensional  setting}
In a dimensional setting, we can canonically represent a grid $\Gamma=\{x_1,\ldots,x_{_N}\}$ of full size $N$ by a unique $N$-tuple with increasing components $(x_1,\ldots,x_{_N})$. So from now on in this section, we will always make the abuse of notation that the symbol $\Gamma$ will denote the $N$-tuple of its values in increasing order. 

\subsubsection{Computing marginal quantizers   with Newton-Raphson algorithm}

It is a well-known fact that if a distribution $\mu$ has a continuous density the resulting distortion functions $D^{\mu}_N$ are in fact twice differentiable at $N$-tuples with pairwise distinct components and negligible Voronoi diagram boundary, with an explicit, though rather involved,  expression (see $e.g.$~\cite{PagYu}). So, as the distortion functions $D_{N}^{m,\Sigma}$ associated to the ${\cal N}(m;\Sigma)$ are twice differentiable, in turn the distortion function of $\widetilde D_{k+1}$ is twice differentiable. Hence  it is possible, at least formally, to write down  a Newton-Raphson zero search procedure, indexed by $\ell\!\in \mathbb N$, where  a current grid $\Gamma_{k+1}^{(\ell)}$ is updated as follows:
\begin{equation}
\Gamma_{k+1}^{(\ell+1)}   =    \Gamma_{k+1}^{(\ell)} - \big(\nabla^2  \widetilde D_{k+1} (\Gamma_{k+1}^{(\ell)}) \big)^{-1} \nabla \widetilde D_{k+1}( \Gamma_{k+1}^{(\ell)}), \quad \ell\ge 0,
\end{equation}
starting  from a $\Gamma^{(0)}\!\in \mathbb R^{N_{k+1}}$ (with increasing components).  Of course,  $ \nabla \widetilde D_{k}(\Gamma_k) $ and $ \nabla^2 \widetilde D_{k}(\Gamma_k) $ denote  respectively the gradient vector and the Hessian matrix of the distortion function   $\widetilde D_{k}$. 

To this end we  need  a closed form for the Hessian $\nabla^2 \widetilde D_{k+1}$ of $\widetilde D_{k+1}$. We will rely on the expression~\eqref{EqStation} for the gradient $\nabla\widetilde D_{k+1}$ and take advantage of the fact that it can  be reduced to the gradient  of the distortion function of a Gaussian   $m+\sqrt{v}Z$, $Z\stackrel{d}{=}{\cal N}(0;1)$. We will  denote by $\Phi_0$ and $\Phi_0'$ the  cumulative distribution function and the  probability density   function of the ${\cal N}(0;1)$ distribution  respectively and we will extensively take advantage of the obvious fact 
\[
\int_{-\infty}^{x} \xi \Phi_0'(\xi)d\xi = -\Phi'_0(x),\; x\!\in \mathbb R.
\]

%


To simplify notation,  set  for every $k=0,\ldots,n-1$ and every    $j=1,\ldots,N_{k+1}$,
$$ 
x^{k+1}_{j-1/2} = \frac{ x^{k+1}_{j} + x^{k+1}_{j-1}  }{2}, \  x^{k+1}_{j+1/2} = \frac{ x^{k+1}_{j} + x^{k+1}_{j+1}  }{2}, \ \textrm{ with }  x^{k+1}_{1/2} = -\infty,  x^{k+1}_{N_{k+1}+1/2} =+\infty, 
$$ 
and let us define for every $\xi\!\in \mathbb R$, $v_k(\xi) = \sqrt{\Delta} \sigma_k(\xi)$, $m_k(\xi) = x +\Delta b_k(\xi)$, 
$$ 
x^{k+1}_{j-}(\xi): = \frac{x^{k+1}_{j-1/2} - m_{k}(\xi) }{v_k(\xi)} \quad \textrm{ and } \quad x^{k+1}_{j+}(\xi): = \frac{x^{k+1}_{j+1/2} - m_{k}(\xi) }{v_k(\xi)},\; k=0,\ldots,n-1.
$$
Let    $\Gamma_{k+1}=(x^{k+1}, \ldots, x^{k+1}_{N_{k+1}}) \!\in \mathbb R^{N_{k+1}}$ (we temporarily drop the dependence of all the grid points 
in $\ell $ to alleviate notations).  Also have in mind that $p^k_i = \mathbb P(\widehat X_k = x^k_i)$. 
%
Then, for every  $j=1,\ldots,N_{k+1}$
\setlength\arraycolsep{0.1pt}
\begin{eqnarray*}
  \frac{\partial \widetilde D_{k+1}(\Gamma_{k+1}) }{\partial  x^{k+1}_j} & = &  \sum_{i=1}^{N_k} p^k_i\Big[ \big( x^{k+1}_j - m_k(x^{k}_i)\big)  \Big(  \Phi_0( x^{k+1}_{j+}(x^{k}_i))  -  \Phi_0(x^{k+1}_{j-}( x^{k}_i)) \Big) \\
  &  & \qquad  + \,  v_k( x^{k}_i) \big(\Phi_0'(  x^{k+1}_{j+}(  x^{k}_i)) - \Phi_0'( x^{k+1}_{j-}(x^{k}_i)) \big) \Big].
   \end{eqnarray*}
 The  diagonal terms of the Hessian matrix $\nabla^2  \widetilde D_{k+1} (\Gamma_{k+1}) $ are given by:
\begin{eqnarray*}
  \frac{\partial^2 \widetilde D_{k+1}(\Gamma_{k+1}) }{\partial^2  x^{k+1}_j} = \sum_{i=1}^{N_k} &  p^k_i \Big[ & \Phi_0( x^{k+1}_{j+}( x^{k}_i))  -  \Phi_0( x^{k+1}_{j-}( x^{k}_i)) 
  \\
&  & - \frac{1}{4 v_k( x^{k}_i)} \Phi_0'( x^{k+1}_{j+}( x^{k}_i))( x^{k+1}_{j+1} - x^{k+1}_{j}) \\
&  &-  \frac{1}{4 v_k( x^{k}_i)} \Phi_0'(x^{k+1}_{j-}( x^{k}_i))( x^{k+1}_{j} - x^{k+1}_{j-1}) \Big] 
\end{eqnarray*}
and its  sub-diagonal terms are  
 \begin{equation*}
 \frac{\partial^2 \widetilde D_{k+1}(\Gamma_{k+1}) }{\partial  x^{k+1}_j \partial  x^{k+1}_{j-1} } = -\frac{1}{4} \sum_{i=1}^{N_k}  p^k_i \frac{1}{v_k( x^{k}_i)} ( x^{k+1}_j - x^{k+1}_{j-1}) \Phi_0'( x^{k+1}_{j-}( x^{k}_i)).
 \end{equation*}
 The upper-diagonals terms are
  \begin{equation*}
\frac{\partial^2 \widetilde D_{k+1}(\Gamma_{k+1}) }{\partial  x^{k+1}_j \partial x^{k+1}_{j+1} } = -\frac{1}{4} \sum_{i=1}^{N_k} p^k_i \frac{1}{v_k( x^{k}_i)} ( x^{k+1}_{j+1} -  x^{k+1}_{j}) \Phi_0'( x^{k+1}_{j+}( x^{k}_i)).
\end{equation*}

A similar  idea  combining  (vector or functional) optimal quantization with Newton-Raphson zero search procedure is used in~\cite{FriSag} in  a variance reduction context  as an alternative and robust method to simulation based recursive importance sampling procedure to estimate the optimal change of measure.  Furthermore,   the convergence of the modified  Newton-Raphson algorithm to the optimal quantizer is shown in the framework of~\cite{FriSag} to be bounded by the quantization error. However, the  tools used to show it   do  not apply directly  in our context and the proof  of the convergence of our  modified Newton algorithm to an optimal quantizer remains  an open question.

\subsubsection{Computing the weights and the transition probabilities}

Once we have access  to   the quadratic optimal quantizers $\Gamma_{k}$ of the marginals $\widetilde X_{k}$, for  $k=0,\ldots, n$   (which are estimated   using the Newton-Raphson  algorithm described previously)  we  need to  compute the associated weights $\mathbb P(\widetilde X_{k}\!\in C_j(\Gamma_k))$, $j =1,\ldots,N_{k}$, for $k=0,\ldots,n$ or the transition probabilities $\mathbb P(\widetilde X_{k}\!\in C_j(\Gamma_k)\vert \widetilde X_{k-1} \!\in C_i(\Gamma_{k-1}))$, $i=1, \ldots,N_{k}$, $j=1,\ldots,N_{k+1}$.  We show in the next result how to compute  them.

\begin{prop} 
Let $\Gamma_{k+1}$ be a quadratic optimal quantizer for the marginal random variable $\widetilde X_{k+1}$. Suppose that  the quadratic optimal quantizer $\Gamma_k$ for $\widetilde X_{k}$ and its companion  weights $\mathbb P(\widetilde X_{k}\!\in C_i(\Gamma_k))$, $i=1,\cdots,N_k$, are computed. 

\begin{enumerate}
  \item The transition probability $p^k_{ij}= \mathbb P\big(\widetilde X_{k+1}\!\in C_j(\Gamma_{k+1})\vert \widetilde X_{k} \!\in C_i(\Gamma_{k})\big)$ is given by
  \setlength\arraycolsep{3pt}
\begin{eqnarray}
 p^k_{ij}& =&p^k_i  \Big(\Phi_0(x^{k+1}_{j+}(x^{k}_i))  -   \Phi_0(x^{k+1}_{j-}(x^{k}_i))  \Big) .
  \label{EqEstProba}
\end{eqnarray}
\item  The probability $p^{k+1}_j= \mathbb P\big(\widetilde X_{k+1}\!\in C_j(\Gamma_{k+1})\big)$  is given for every $j=1,\cdots,N_{k+1}$ by 
\setlength\arraycolsep{3pt}
\begin{eqnarray}
p^{k+1}_j & =&  \sum_{i=1}^{N_k} p^k_i \Big(   \Phi_0(x^{k+1}_{j+}(x^{k}_i))  - \Phi_0(x^{k+1}_{j-}(x^{k}_i))  \Big) .
  \label{EqEstProba}
\end{eqnarray}

  \end{enumerate}

\end{prop}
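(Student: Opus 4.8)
The plan is to unwind the definition of $\widetilde X_{k+1}$ as an explicit function of $\widetilde X_k$ and of the independent Gaussian increment, and then to read off both probabilities by a one-dimensional Gaussian computation on Voronoi intervals.

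First I would recall that $\widetilde X_{k+1} = {\cal E}_k(\widehat X_k, Z_{k+1}) = m_k(\widehat X_k) + v_k(\widehat X_k) Z_{k+1}$, where $\widehat X_k = \sum_{i=1}^{N_k} x_i^k \mathbf{1}_{\{\widetilde X_k \in C_i(\Gamma_k)\}}$ and $Z_{k+1} \sim {\cal N}(0;1)$ is independent of $\widetilde X_k$. Since $\Gamma_k$ is a quadratic optimal quantizer, its components are pairwise distinct and $\mathbb P(\widetilde X_k \in \cup_i \partial C_i(\Gamma_k)) = 0$ --- indeed, for $k \ge 1$ the random variable $\widetilde X_k$ is a finite mixture of nondegenerate Gaussian laws, hence admits a density --- so that, up to a $\mathbb P$-null set, $\{\widetilde X_k \in C_i(\Gamma_k)\} = \{\widehat X_k = x_i^k\}$. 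Conditionally on this event one then has $\widetilde X_{k+1} = m_k(x_i^k) + v_k(x_i^k) Z_{k+1}$, with $v_k(x_i^k) = \sqrt\Delta\,\sigma_k(x_i^k) > 0$ since $\sigma$ is positive definite.

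Next, after ordering the grid so that $x_1^{k+1} < \cdots < x_{N_{k+1}}^{k+1}$, the Voronoi cell $C_j(\Gamma_{k+1})$ is, up to its endpoints, the interval $(x^{k+1}_{j-1/2}, x^{k+1}_{j+1/2})$ with the conventions $x^{k+1}_{1/2} = -\infty$ and $x^{k+1}_{N_{k+1}+1/2} = +\infty$. Using the independence of $Z_{k+1}$ from $\widetilde X_k$, I would then compute
\begin{align*}
\mathbb P\big(\widetilde X_{k+1} \in C_j(\Gamma_{k+1}) \,\big\vert\, \widetilde X_k \in C_i(\Gamma_k)\big)
&= \mathbb P\Big( x^{k+1}_{j-1/2} < m_k(x_i^k) + v_k(x_i^k) Z_{k+1} < x^{k+1}_{j+1/2} \Big) \\
&= \mathbb P\Big( x_{k+1,j-}(x_i^k) < Z_{k+1} < x_{k+1,j+}(x_i^k) \Big) \\
&= \Phi_0'\big(x_{k+1,j+}(x_i^k)\big) - \Phi_0'\big(x_{k+1,j-}(x_i^k)\big),
\end{align*}
recalling that $\Phi_0'$ denotes the standard Gaussian c.d.f.; this is item~1 (the factor $\mathbb P(\widetilde X_k \in C_i(\Gamma_k))$ appearing on the right-hand side of the displayed identity should be read as a misprint, the conditional probability being exactly the bracketed increment). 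For item~2, I would then invoke the law of total probability with respect to the partition $\{C_i(\Gamma_k)\}_{1\le i\le N_k}$ of the state space of $\widetilde X_k$,
\[
\mathbb P\big(\widetilde X_{k+1} \in C_j(\Gamma_{k+1})\big) = \sum_{i=1}^{N_k} \mathbb P\big(\widetilde X_{k+1} \in C_j(\Gamma_{k+1}) \,\big\vert\, \widetilde X_k \in C_i(\Gamma_k)\big)\, \mathbb P\big(\widetilde X_k \in C_i(\Gamma_k)\big),
\]
and substitute the expression just obtained.

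The computation itself is elementary and there is no genuinely hard step; the content of the proof is the identification of the conditional law in the first step. The only points requiring a little care are the justification that the Voronoi cell boundaries carry no $\widetilde X_k$-mass --- so that the conditioning events may be identified with $\{\widehat X_k = x_i^k\}$ --- and that $v_k$ never vanishes, both of which rest on the standing assumptions (positive definiteness of $\sigma$, linear growth) together with the mixture-of-Gaussians structure of $\widetilde X_k$ for $k\ge 1$ (the case $k=0$ being trivial since $N_0=1$ and $\widetilde X_0 = x_0$).
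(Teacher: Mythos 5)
Your proof is correct and follows essentially the same route as the paper: identify the conditioning event $\{\widetilde X_k \in C_i(\Gamma_k)\}$ with $\{\widehat X_k = x_i^k\}$, compute the conditional Gaussian probability of $\widetilde X_{k+1}=m_k(x_i^k)+v_k(x_i^k)Z_{k+1}$ landing in the Voronoi interval $(x^{k+1}_{j-1/2},x^{k+1}_{j+1/2})$, and then apply the law of total probability for item~2. Your reading of the extra factor $\mathbb P(\widetilde X_k \in C_i(\Gamma_k))$ in item~1 as a misprint is consistent with the paper's own proof, which indeed yields the conditional probability as the bare increment $\Phi_0'(x_{k+1,j+}(x^k_i))-\Phi_0'(x_{k+1,j-}(x^k_i))$.
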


\begin{proof} [{\bf Proof.}]
1. For every $k\!\in \{ 1,\ldots,n-1 \}$, for every $i=1, \ldots, N_k$ and  for every  $j=1,\ldots,N_{k+1}$, we have
\setlength\arraycolsep{3pt}
 \begin{eqnarray*} \label{EqSumNk-1Dist}
 \mathbb P(\widetilde X_{k+1}\!\in C_j(\Gamma_{k+1})\vert \widetilde X_{k} \!\in C_i(\Gamma_{k})) &=&   \mathbb P \big(\widetilde X_{k+1}\!\in C_{j}(\Gamma_{k+1}) \vert \widehat  X_{k} =  x_i^{N_{k}}  \big) \\
  & =  & \mathbb P \big(\widetilde X_{k+1} \leq  x^{N_{k+1}}_{j+1/2}\vert  \widehat  X_{k} =  x_i^{N_{k}} \big)   - \  \mathbb P \big(\widetilde X_{k+1} \leq  x^{N_{k+1}}_{j-1/2} \vert   \widehat  X_{k} =  x_i^{N_{k}} \big)\\
  & = &  \Phi_0(x^{k+1}_{j+}(x^{k}_i))  -   \Phi_0(x^{k+1}_{j-}(x^{k}_i)). 
 \end{eqnarray*}

\noindent 2. We have for every $k\!\in \{ 1,\ldots,n-1 \}$ and for every  $j=1,\ldots,N_{k+1}$, 
 \setlength\arraycolsep{3pt}
\begin{eqnarray*} \label{EqSumNk-1} 
 \mathbb P \big(\widetilde X_{k+1}\!\in C_{j}(\Gamma_{k+1}) \big) & = &  \mathbb E\big[ \mathbb P\big(\widetilde X_{k+1}\!\in C_{j}(\Gamma_{k+1})   \vert \widehat X_k \big) \big]  \\
 & = &    \sum_{i=1}^{N_{k}} \mathbb P \big(\widetilde X_{k+1}\!\in C_{j}(\Gamma_{k+1})   \vert \widehat  X_{k} =  x_i^{N_{k}} \big)  \mathbb P(\widetilde X_k\!\in C_i(\Gamma_k)).
 \end{eqnarray*}
 Now, il follows from the first assertion  that   
 \begin{equation*} 
  \mathbb P \big(\widetilde X_{k+1}\!\in C_{j}(\Gamma_{k+1}) \vert \widehat  X_{k} =  x_i^{N_{k}}  \big)  =   \Phi_0\big(x^{k+1}_{j+}(x^{k}_i)\big)  -   \Phi_0\big(x^{k+1}_{j-}(x^{k}_i)\big). 
 \end{equation*}
This completes the proof. 
\end{proof}

%
%
\section{Numerical examples} \label{SectNum}
  \subsection{Numerical example for Brownian motion} 
  
%

  We consider a real valued Brownian motion $(W_t)_{t \in [0,1]}$ and quantize the random variable $W_1$ by both regular marginal quantization and recursive marginal quantization methods.   Denote by $D_{M}^{\rm reg}(\Gamma)$   the regular quantization distortion associated to $W_1$ and, for a given discretization   mesh: $t_0=0< \cdots < t_n=1$ of size $n$,  denote  by $D^{\rm rec}_n(\Gamma_n)$ the  distortion associated to the recursive quantization of $W_1$ at the end point $t_n=1$.

We recall that for a given grid size $M$,
\[
D_{M}^{\rm reg} (\Gamma)  =   \mathbb E\vert W_1 - \widehat W_1^{\Gamma} \vert^2,
\] 
 where  the  optimal grid $\Gamma$ for the regular  marginal quantization  is obtained  by solving (using Newton-Raphson algorithm) the following minimization problem: $\!\inf_{\Gamma\!\in \mathds R^M}  D_{M}^{\rm reg} (\Gamma)$, which corresponds to the optimal grid of the standard Gaussian distribution. On the other hand, 
\[
D^{\rm rec}_n(\Gamma_n) =  \mathbb E\vert W_1 - \widehat W_1^{\Gamma_n} \vert^2,
\]
where $\Gamma_n$ is the $n$-th component of the sequence of recursive marginal  quantization grids $(\Gamma_k)_{k=0, \ldots, n}$   defined for every $k=0,\ldots,n$ by 
\[
\Gamma_k \!\in  \argmin \{ \widetilde D_{k}(\Gamma), \Gamma \subset \mathbb R, {\rm card}(\Gamma) \leq N_{k} \}, 
\]
where 
     \begin{eqnarray*}
  \widetilde D_{k} (\Gamma)  :=    \mathbb E \big[   {\rm dist}(\widetilde W_{t_{k}}, \Gamma)^2 \big] & = & \mathbb E \big[   {\rm dist}\big(\widehat W_{t_{k-1}}^{\Gamma_{k-1}} + \sqrt{\Delta} Z_{k} , \Gamma \big)^2 \big] \\
  &  = &  \sum_{i=1}^{N_{k-1}}  \mathbb E \big[   {\rm dist}\big(w_i^{k-1} + \sqrt{\Delta} Z_{k}, \Gamma \big)^2 \big] \mathbb P\big(\widehat W_{t_{k-1}}^{\Gamma_{k-1}} =w_i^{k-1} \big)
 \end{eqnarray*}
and  $\widehat W_{t_{k-1}}^{\Gamma_{k-1}}$ is defined from the following recursion: $\widehat W_{0}^{\Gamma_{0}} = 0$ and for $\ell =1,\ldots,k-1$,
 \[
    \widetilde W_{t_{\ell}}  =  \widehat W_{t_{\ell -1}}^{\Gamma_{\ell-1}} + \sqrt{\Delta} Z_{\ell}    \  \textrm{ and } \  \widehat W_{t_{\ell}} ^{\Gamma_{\ell}} = {\rm Proj}_{\Gamma_{\ell}}(\widetilde W_{t_{\ell}}),  \  (Z_{\ell})_{\ell=1,\ldots, k} \textrm{ is  }  i.i.d.,  \textrm{ and }{\cal N}(0;1) \textrm{-distributed}.  
\] 

 Our aim is precisely to  compare   the regular  quantizations  error $(D_{M}^{\rm reg} (\Gamma))^{1/2}$  and  the  recursive quantizations error  $(\widetilde D_{n}^{\rm rec} (\Gamma_n))^{1/2}$  at time $t_n$ of size $N_n$, for the same  grid size  $M=N_n$ (the choice  of the grid size  $N_n$ depends on the used dispatching procedure and is specified  further).  To compute the quantity $D^{\rm rec}_n(\Gamma_n)$, we need to know the distribution of the couple of random variables  $(W_1, \widehat W_1^{\Gamma_n})$ or $(W_1, \widetilde W_1)$, which distributions are unfortunately  unknown. 
 
  This  leads us to consider the following approximation for  $D^{\rm rec}_n(\Gamma_n)$:
  \begin{equation*} \label{EqApproxDistrec}
  D^{\rm rec}_n(\Gamma_n) =\sum_{i=1}^{N_n} \mathds E \Big(\vert  W_1  - w_i^n \vert^2 \mbox{\bf{1}}_{ \{\widetilde W_1  \in C_i(\Gamma_n) \}} \Big)\approx \widetilde D^{\rm rec}_n(\Gamma_n):= \sum_{i=1}^{N_n} \mathds E \Big(\vert  W_1  - w_i^n \vert^2 \mbox{\bf{1}}_{\{W_1  \in C_i(\Gamma_n)  \}} \Big).
  \end{equation*}
Therefore, we compare in practice the two quantities $(\widetilde D^{\rm rec}_n(\Gamma_n))^{1/2}$ and  $(D_{M}^{\rm reg} (\Gamma))^{1/2}$. To this end, we fix first  the  mesh size $n=50$ and   we make the total budget   $N=N_1+ \cdots +N_n$ (given that $N_0=1$) of  the recursive  grid sizes varying  $50$ by $50$, from $250$ up to $5000$.  We choose the sizes $N_k$  following two procedures: the  optimal and  the uniform  dispatching. 

%
 
  \smallskip
  $\rhd$ {\em Uniform dispatching}.  For  a given global budget $N$,  we make an ``equal grid size dispatching'' by choosing $N_k = N/n$, for $k=1,\ldots,n$. If for example  $N=N_1+ \cdots+N_{n} = 250$, we will have $N_k  = 5$, for every $k\!\in \{1, \ldots, n\}$. For comparison tool with  the regular quantization method we choose  the size $M$ of the regular quantization equal to $N_n$, for every fixed global budget  $N$. The comparaison result  is depicted on the right hand side graphic of Figure~\ref{figCompRegMar}. In this figure,  we plot the (approximated) recursive marginal quantization errors $(\widetilde{D}_{n}^{\rm rec} (\Gamma_{n}))^{1/2}$, for $\vert \Gamma_{n} \vert=5, \ldots,100$, and the regular quantization errors $(D_M^{\rm reg}(\Gamma))^{1/2}$, for $M=\vert \Gamma \vert =5, \ldots,100$.

 \smallskip
  $\rhd$ {\em  Optimal dispatching}.  In this case, the  sizes $N_k$ are obtained from the optimal dispatching procedure described  in the {\sc Practionner corners}, (a). First of all, we have to choose  the coefficients $a_{\ell}$ (appearing in Theorem~\ref{TheoConvergenceRate})  corresponding to  the Brownian motion. Following, step by step, the proof of  Theorem~\ref{TheoConvergenceRate} and setting $\eta =1$ (keep in mind that in the Brownian case  $x_0 = 0$), we may improve our estimate for the coefficient $a_{\ell}$. Namely, we have (at time $t_n=T$)
\[  
a_{\ell} = \left[  \sqrt{\frac{2}{\pi}}(4 + \sqrt{\Delta}) (e^{2 t_{\ell}} -1)   \right]^{1/3}, \quad \ell =1,\ldots,n.
\]
Making  $N$ vary  from  $250$ to $5000$, the optimal dispatching leads to the following  sizes for the grid $\Gamma_n$:   
\[
\vert \Gamma_n \vert\!\in {\cal G} =  \{ 6,    8,    9,    10,    11,    13,   \cdots, 123,   124,   126,  127 \}.
\]
Notice that we just display  the first and the last ones   values  of   ${\cal G}$ because it is of size  $96$.  Here is how to read these values.  If we fix the size of the global budget to  $N=250$,  then the optimization procedure described in  the {\sc Practionner corners}, (a), will generate the optimal grid sizes $N_1, N_2, \ldots, N_n$ inducing the minimal error bound  in Theorem~\ref{TheoConvergenceRate}. In the set ${\cal G}$, we only display the size $N_n=N_{50}$ of the terminal grid $\Gamma_{50}$ which then is equal to $6$ for $N=250$. So, to compare the regular quantization method and  the recursive procedure with optimal dispatching  and global budget $N=250$, we have to put the grid size $M$ of the regular quantization method to $M=6$.  If the global budget  $N =300$ then we  choose  $M=N_n=8$, etc, and  if $N=5000$ then we choose $M=N_n =127$.

The comparaison result is depicted  on the left hand side of Figure~\ref{figCompRegMar}. So, this figure plots  the (approximated) recursive quantization errors $(\widetilde{D}_{n}^{\rm rec} (\Gamma_{n}))^{1/2}$, where  $\vert \Gamma_{n} \vert$ is given by the optimal dispatching procedure, and the  regular quantization errors $(D_M^{\rm reg}(\Gamma))^{1/2}$   for $M=\vert \Gamma_n \vert\!\in {\cal G}$.

 \begin{figure}[htpb]
  \!\includegraphics[width=8.5cm,height=7.0cm]{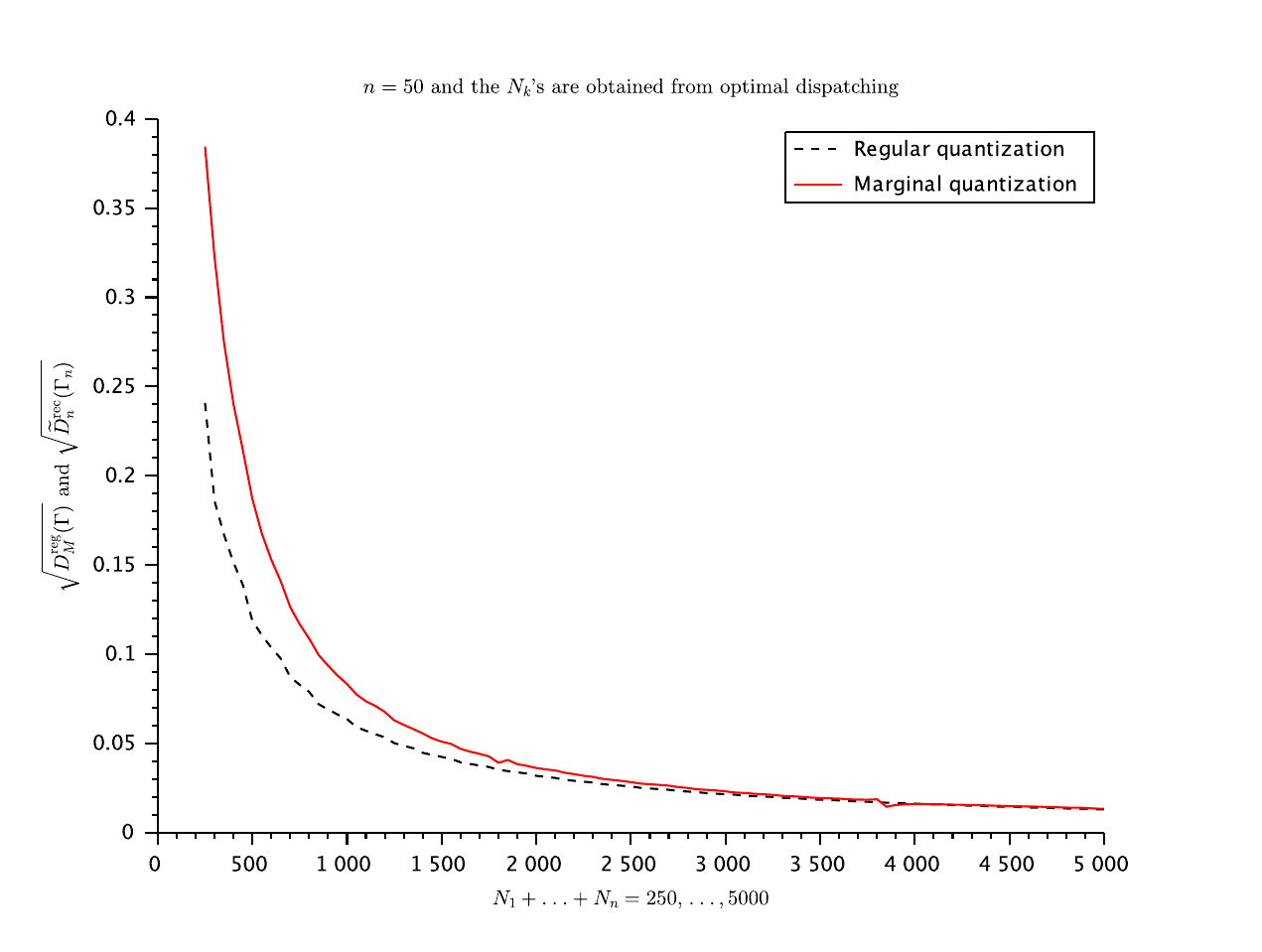}	
\hfill   \!\includegraphics[width=8.5cm,height=7.0cm]{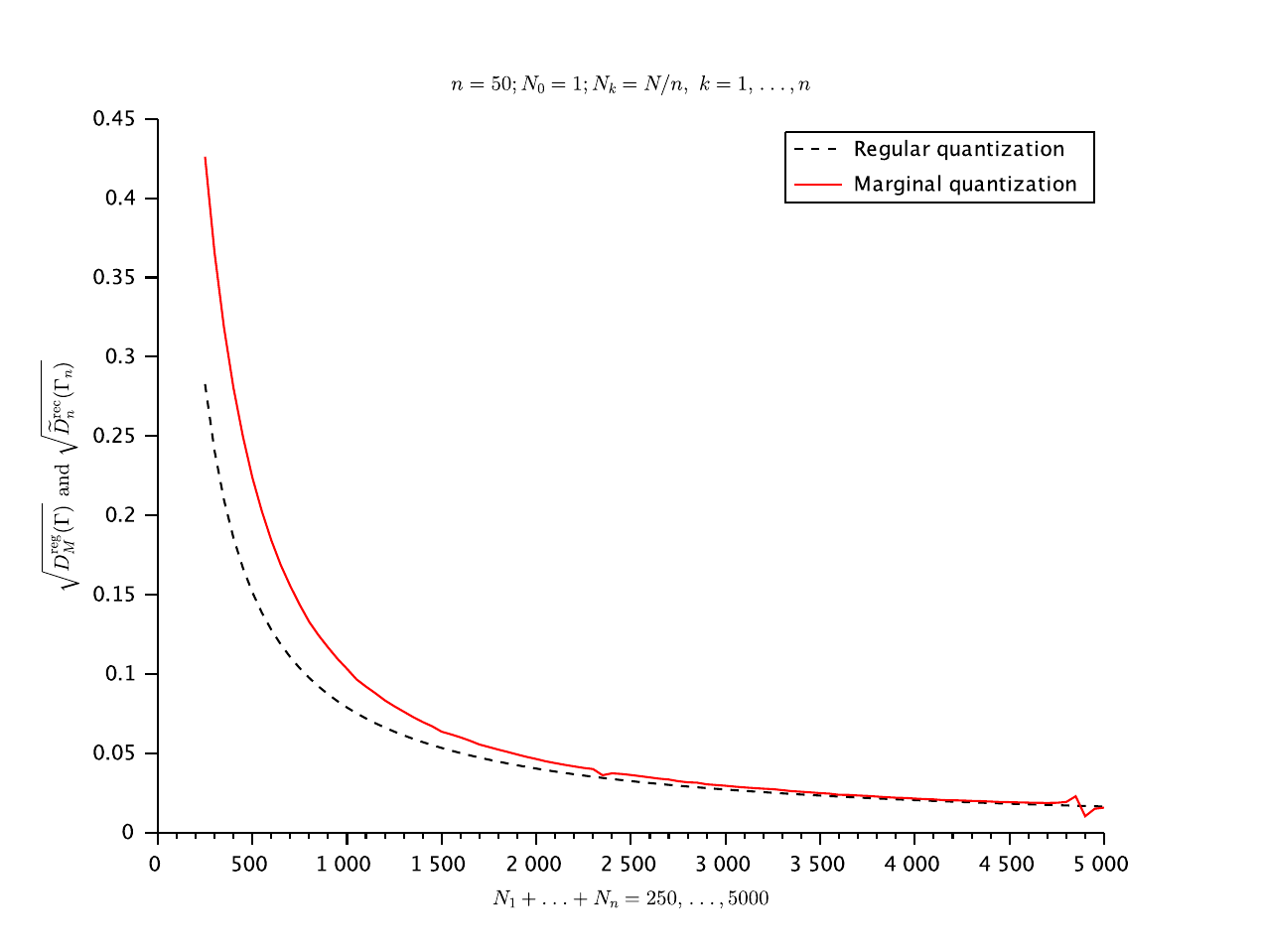}
  \caption{\footnotesize  Comparison of the regular quantization (Regular quantization)  of $W_1$  with its recursive  marginal quantization (Marginal quantization) (where $W$ is a Brownian motion).  Abscissa axis: $n=50$ and  the total budget  $N=N_1+\cdots+N_n$  varies from $250$ up to $5000$.  Ordinate axis:  For  a given $N$,  $(a)$ (left hand side graphic)  we depict  the recursive  quantization  errors $(\widetilde{D}_{n}^{\rm rec} (\Gamma_{n}))^{1/2}$ where  $\vert \Gamma_{n} \vert$ is given by the optimal dispatching procedure, and the regular quantization errors $(D_M^{\rm reg}(\Gamma))^{1/2}$   for $M=\vert \Gamma_n \vert$; $(b)$ (right hand side graphics) we set $N_k = N/n$, for $k=1,\ldots,n$ and depict the recursive quantization  errors $(\widetilde{D}_{n}^{\rm rec} (\Gamma_{n}))^{1/2}$, for $\vert \Gamma_{n} \vert=5, \ldots,100$, and the regular quantization errors $(D_M^{\rm reg}(\Gamma))^{1/2}$, for $M=\vert \Gamma \vert =5, \ldots,100$. } 
  \label{figCompRegMar}
\end{figure}

\begin{figure}[htpb]
 \begin{center}
  \!\includegraphics[width=11.5cm,height=8.0cm]{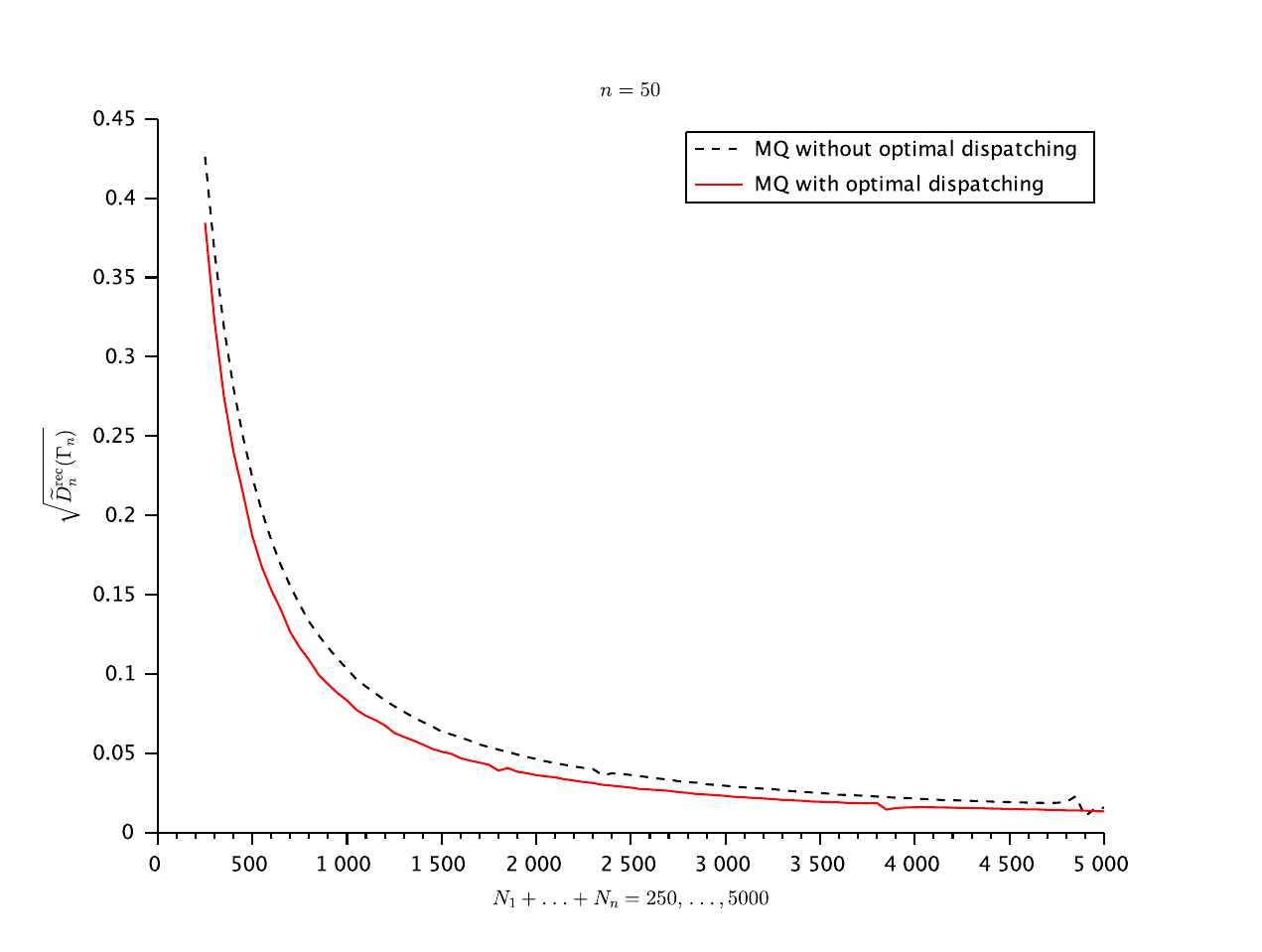}	
  \caption{\footnotesize  Comparison of the recursive marginal quantization errors. Abscissa axis:  $n=50$ and  the total grid sizes  $N=N_1+\cdots+N_n$  varies from $250$ up to $5000$. Ordinate axis: on one hand, we set $N_k = N/n$, for $k=1,\ldots,n$ and depict   the  (approximated) recursive  quantization errors $(\widetilde{D}_{n}^{\rm rec} (\Gamma_{n}))^{1/2}$, for $\vert \Gamma_{n} \vert=5, \ldots,100$ (MQ without optimal dispatching), and, on the other hand,  we depict  the (approximated) recursive  quantization errors $(\widetilde{D}_{n}^{\rm rec} (\Gamma_{n}))^{1/2}$  where  $\vert \Gamma_{n} \vert$ is given by the optimal dispatching procedure (MQ with optimal dispatching).} 
  \label{figCompMQ}
 \end{center}
\end{figure}

\smallskip
 {\em Conclusion}.  The graphics of figure~\ref{figCompRegMar} and~\ref{figCompMQ} suggest  two observations. The first one is that, when the global budget $N$ is too small,  the regular quantization of $W_1$ is, as expected,  more efficient than  both  recursive marginal  quantizations (without and  with optimal dispatching). But, when this  global budget $N$ increases,   the absolute error between  regular and  recursive quantization methods  fades  and becomes close to zero up to $10^{-2}$. 
 

The second conclusion is that the  recursive marginal quantization method with the optimal dispatching of the grid size over discretization time steps  outperforms a setting where  the  grids are  of equal sizes.  However, when $N$  increases, the  recursive marginal quantization with optimal dispatching becomes, as expected, more time consuming while  both methods  yield  almost the same results (the errors are equal up to $10^{-2}$).  

\smallskip

In the next section we propose  an application of our  method to  the pricing of European options in a  local volatility models. We remark that when using the marginal quantization methods,  we have to choose a  big global budget $N$ to reach  good price estimates. Like in the Brownian case, numerical results show that both  recursive marginal quantization  methods (with  optimal and uniform dispatching) lead to the same price estimates (up to $10^{-3}$) whereas  the complexity of the optimal dispatching method becomes higher (as pointed out in the practitioner's corner).   This is why we will use in the following section the recursive marginal quantization method with uniform dispatching  procedure.

\subsection{Pricing of European options in a local volatility model}  
\subsubsection {The model}
We consider a pseudo-CEV model (see e.g.~\cite{LemPag}) where the dynamics of  the stock price  process is ruled  by the following SDE (under the risk neutral probability)
\begin{equation}
dX_t = rX_t dt + \vartheta \frac{X_t^{\delta+1}}{\sqrt{1+X_t^2}} dW_t, \quad X_{0} = x_{0},
\end{equation}
for some $\delta\!\in (0,1)$ and $\vartheta\!\in (0,\underline{\vartheta}]$   with $\underline{\vartheta}>0$. The parameter $r$  stands for  the interest rate and $\sigma(x) := \vartheta \frac{x^{\delta}}{\sqrt{1+x^2}}$ corresponds to the local volatility function. This model  becomes   very close to  the CEV model,  specially when the initial value of the stock process  $X_0$ is  large enough.  In this  case the local volatility $\sigma(x) \approx \vartheta x^{\delta-1}$.  

\vskip 0.3cm
 
\begin{table}[htbp]
 \begin{center}
 \footnotesize
  \begin{tabular}{l*{6}{l}}
&   &      &    &   &   \\
   
  \hline
&  $\vartheta$ & \qquad   MC ($10^5$) &    RMQ   ($N_k=300$) &  \qquad MC ($10^6$)    &   RMQ ($N_k=400$)     \\
  \hline  
  &   &      &    &    &  &   \\
 
& $0.5 $ & \qquad  \ \  $0.0022 $ &  \quad $0.0016$    & \qquad \quad  $ 0.0018$   & $\ \ 0.0017$    \\
 CI &  &   ${\bf [0.0017;0.0028] }$ &     &  \quad  ${\bf [0.0017;0.0019] }$  &    \\
&  $ 0.6$ & \qquad \ \   $0.0113$ &\quad $0.0108$   &  \qquad \quad  $0.0111$ &  \ \  $0.0110 $     \\
CI &  &  $ {\bf [0.0101;0.0125] } $&      &  $ \quad \ {\bf [0.0107;0.0115] } $  &    \\
&$ 0.7$ &  \qquad \ \ $0.0377 $ &\quad  $0.0367$  &   \qquad \quad  $0.0373$  &  \ \  $0.0370$   \\
 CI &   & $  {\bf [0.0353;0.0401] }$ &   &   $  \quad  \  {\bf [0.0366;0.0381] }$  &       \\
&  $0.8$    &    \qquad \ \   $ 0.0883$   & \quad  $0.0867$   &   \qquad \quad $0.0876$   & \ \ $ 0.0871$      \\
  CI &   &   ${\bf [0.0843;0.0923] }$ &    &  \quad \  ${\bf [0.0863;0.0886] }$ &   \\ 
 & $ 0.9$      &  \qquad \ \   $0.1696$   & \quad  $0.1644$  &  \qquad \quad   $0.1659$ &   \ \  $0.1649$    \\
  CI &   &   ${\bf [0.1635;0.1756] }$ &  &    \quad \  ${\bf [0.1640;0.1678] }$   &  \\
 & $1.0 $    & \qquad \quad   $ 0.267 $   & \quad  $0.270$  &  \qquad \quad \  \   $0.271$  &   \quad  $ 0.271$  \\
  CI &  & \ \  \    ${\bf [0.259;0.275] }$ &   &  \quad  \ \ \    ${\bf [0.269;0.274] }$ &    \\
   &$ 2.0$     & \qquad \quad  $ 2.423 $   &  \quad  $2.425$ &  \qquad \quad  \  \  $2.433$ & \quad $ 2.426$ \\
  CI &  & \ \ \     ${\bf [2.387;2.459] }$ &  &   \quad \ \ \     ${\bf [2.422;2.445] }$   &   \\
   & $ 3.0$     & \qquad \quad  $5.424$   & \quad  $5.475$ &  \qquad \quad   \ \  $5.492$  &  \quad $ 5.478$   \\
  CI &  & \ \  \  ${\bf [5.424;5.512] }$ &   &  \quad \ \ \   ${\bf [5.471;5.512] }$ &   \\
    & $4.0$     & \qquad \quad   $8.893$   & \quad  $8.804$ &  \qquad \quad \  \    $8.806$  & \quad  $ 8.808$ \\
  CI &  &   \ \ \  $ {\bf [8.801;8.986] }$ &    &  \quad \ \ \   ${\bf [8.777;8.835] }$  &    \\
  &   &      &    &   &    \\
 \hline
  \end{tabular} 
\caption{ \label{tabPrixPutLV} \small{ (Pseudo-CEV model) Comparison of the Put  prices obtained from Monte Carlo (MC) simulations (followed by its size) with associated confidence intervals (CI) and from the  recursive marginal quantization (RMQ) method  with  equal grid size  allocation $N_k=300$ or $N_k=400$, $\forall k=1,\ldots,n$. The parameters are:  $r =0.15$; $\delta = 0.5$; $n=120$; $T=1$; $K=100$; $X_{0}=100$;  and for varying values of $\vartheta$.}}
\end{center}
\end{table}

\vskip 0.3cm

\begin{table}[htbp]
 \begin{center}
 \footnotesize
 \begin{tabular}{l*{7}{l}}
    
  \hline
&  $K$ & \qquad  \   MC ($10^5$) & \qquad \ \  MC ($10^6$)   &    RMQ $(N_k=300)$   & \qquad \ \  MC ($10^7$)    &      RMQ  $(N_k=400)$    \\
  \hline  
  &   &      &    &    &  &    \\
& $100$  & \qquad \quad    $08.89$   & \qquad \quad \ \ \  $08.81$ & \quad  $08.80$    &  \qquad \quad \ \ \      $08.81$   & \quad  $08.81$    \\
 CI &  &   \quad \ $ {\bf [08.80;08.99] }$ &\quad \ \ \  ${\bf [08.78;08.84] }$ &    & \quad \ \ \  ${\bf [08.80;08.82] }$  &    \\
&  $ 105$ & \qquad \quad  $10.61$ & \qquad \quad  \ \ \ $10.60$ &   \quad  $10.59$  & \qquad \quad  \ \  \ $10.59$ &  \quad  $10.59$     \\
CI &  & \quad \   $ {\bf [10.51;10.72] } $& \quad \  \ \    ${\bf [10.57;10.63] } $&        &  \quad \ \ \  ${\bf [10.58;10.60] } $ &   \\
&$ 110$ &  \qquad \quad  $12.53 $ &  \qquad \quad  \ \ \  $12.57$  &    \quad  $12.57$  & \qquad \quad  \ \ \  $12.57$   &  \quad  $12.57$   \\
 CI &   & \quad \ $  {\bf [12.42;12.64] }$ &  \quad  \ \ \    $  {\bf [12.53;12.60] }$ &      &   \quad  \ \ \    $  {\bf [12.56;12.58] }$  &    \\
& $ 115 $   &    \qquad \quad $ 14.72$   & \qquad \quad \ \  \ $14.74$ &   \quad  $14.75$     &  \qquad \quad \ \  \   $14.75$  & \quad   $ 14.75$     \\
  CI &   &  \quad \ ${\bf [14.60;14.84] }$ & \quad \ \ \   ${\bf [14.70;14.78] }$ &       &  \quad \ \ \   ${\bf [14.75;14.77] }$ &   \\ 
 & $ 120$      &  \qquad \quad  $17.18$   & \qquad \quad  \ \ \   $17.10$ &    \quad   $17.11$     & \qquad \quad  \ \ \   $17.13$ &  \quad  $17.12$    \\
  CI &   &  \quad \  ${\bf [17.04;17.31] }$ & \quad \ \ \    ${\bf [17.06;17.15] }$ &      &  \quad \ \ \    ${\bf [17.11;17.14] }$   &    \\
 & $125$     & \qquad \quad    $ 19.64 $   & \qquad \quad \ \  \ $19.69$  &  \quad  $19.66$   &  \qquad \quad  \ \ \    $19.67$ &   \quad  $ 19.67$ \\
  CI &  & \quad \    ${\bf [19.50;19.78] }$ & \quad \   \ \  ${\bf [19.64;19.73] }$ &       &   \quad \   \ \  ${\bf [19.65;19.68] }$ &    \\
   & $130$     & \qquad \quad   $ 22.41 $   & \qquad \quad   \ \  \  $22.32$ &     \quad   $22.39$     & \qquad \quad   \ \  \  $22.40$  & \quad $ 22.40$  \\
   CI &  & \quad \    ${\bf [22.26;22.56] }$ & \quad \   \ \  ${\bf [22.32;22.41] }$ &        & \quad \   \ \  ${\bf [22.38;22.41] }$ &    \\
  &   &      &    &  &     &   \\
 \hline
  \end{tabular} 
\caption{ \label{tabPrixPutLVStrike} \small{ (Pseudo-CEV model) Comparison of the Put prices obtained from Monte Carlo (MC) simulations (followed by its size) with associated confidence intervals (CI) and from the recursive marginal quantization (RMQ) method method  with  equal grid size  allocation $N_k=300$ or $N_k=400$, $\forall k=1,\ldots,n$. The parameters are:  $r =0.15$; $n=120$;  $N_k=400$, $\forall k=1,\ldots,n$; $T=1$; $\vartheta=4$; $X_{0}=100$;  and for varying values of $K$.} }
\end{center}
\end{table}

We aim at computing  the price  of a European Put option with payoff $(K- X_T)^{+}=\max(K- X_T,0)$, where $K$ corresponds to  the strike of the option  and $T$ to  its  maturity. Then we have to approximate the quantity
$$  e^{-rT} \mathbb E(K - X_T )^{+} $$
where $\mathbb E$ stands for the expectation under  the risk neutral probability.  If  the process  $(\bar X_{t_k})_k$ denotes the discrete Euler process  at regular  time discretization steps $t_k$, with $0= t_0<\cdots<t_n=T$, associated to the diffusion process  $(X_t)_{t \geq 0}$, this turns out to estimate 
$$  e^{-rT} \mathbb E(K - \bar X_T )^{+} $$
by optimal quantization.  We estimate this quantity by the recursive marginal quantization method introduced in this paper and compare the numerical results with those obtained from  standard  Monte Carlo simulations. 

\subsubsection{Numerical results}   \label{SecNumericalResults}

  To deal with numerical examples we set $\delta=0.5$, $X_0=100$, and choose the interest rate $r=0.15$. We discretize the price process using the Euler scheme with $n=120$ (regular)  discretization steps   and quantize the Euler marginal processes by our proposed  method.  For $k=1, \ldots,n$, we put all  the marginal quantization  grid   sizes  $N_k$ equal to $300$ and then, to $400$  (recall that $\widehat X_0 = X_0 =100$ and  $N_0=1$).  We estimate  the price of the Put option  by 
\begin{equation} \label{EqSumMQ}
   \mathbb E \big[ \big(K - \widehat X_{t_n}^{\Gamma_n}\big)^{+} \big] = \sum_{i=1}^{N_n} (K- x_i^{N_n})^{+} \, \mathbb P \big(\widehat X_{t_n}^{\Gamma_n} = x_i^{N_n} \big)  
 \end{equation}
where $t_n =T$, and where   $\Gamma_n = \{ x^{N_n}_1,\cdots, x^{N_n}_{N_n} \}$ is the  quantizer  of size $N_n$ computed from the  Newton-Raphson algorithm (with $5$ iterations) and where the associated weight are estimated from  (\ref{EqEstProba}).

   We compare the prices obtained from the recursive marginal quantization (RMQ) method with those obtained by the Monte Carlo (MC) simulations   for  various values of $\vartheta$ with a fixed strike $K=100$ (see Table~\ref{tabPrixPutLV}) or for varying  values of the strike $K$ with a fixed $\vartheta = 4$ (see Table~\ref{tabPrixPutLVStrike}). For the Monte Carlo simulations   we set  the  sample size  $M_{\rm mc}$ equal to  $10^{5}$ and   $10^{6}$ for $K=100$ and  to $M_{\rm mc}=10^{5}$,  $10^{6}$ and $10^{7}$ when making the strike $K$ varying.

\begin{rem} ({\em on the computation time})
$ (a)$ Remark that all the quantization grids $\Gamma_k$ of sizes $N_k =300$  (and $N_k=400$), for every $k=1,\ldots,n=120$, and there companion weights are obtained in  around $40$ seconds  (and $1$ minute)   from the  Newton-Raphson algorithm with $5$ iterations. Computations are performed using {\em Scilab} software  on a CPU  $2.7$  GHz  and   4 Go memory computer.   

\noindent $(b)$ It is clear that   once the grids and the associated  weights  are available, the estimation of the price by RMQ method using the sum (\ref{EqSumMQ}) is instantaneous.
\end{rem}

\begin{rem} ({\em Initialization of the Newton-Raphson  algorithm}) Let  $0= t_0<\cdots<t_n$ be the time discretization steps, let $X_0 = x$ be the present value of the stock price process and suppose that the grid sizes $N_k$ are all  equal. Since  the random variable $\bar X_{t_1} \sim \mathcal N(m_0(x);v_0^2(x))$, in order to  compute the (optimal) $N_1$-quantizer  for $\bar X_{t_1}$ we initialize the algorithm to $v_0(x) z^{N_1} + m_0(x)$, where $z^{N_1}$ is the optimal $N_1$-quantizer of the $\mathcal N(0;1)$. Once we get the optimal $N_1$-quantization $\Gamma_1$ for  $\bar X_{t_1}$ and its companion   weights,  we initialize the algorithm to $\Gamma_1$ to perform the optimal $N_{_2}$-quantizer for $\bar X_{t_{_2}}$ and its companion weights, $\cdots$, and so on, until we get the optimal $N_n$-quantizer for $\bar X_{t_n}$ and the associated weights. Notice   that doing so we observe  no failure of  the convergence in all the  considered examples.
\end{rem}

  \begin{rem} 
  We show in Figure~\ref{figure1} and Figure~\ref{figure2} two graphics  where we depict on the abscissa  axis   the optimal grids (of sizes $N_k = 150$) and on the ordinate axis the corresponding weights.  The dynamics of the price process in    Figure~\ref{figure1} is given by 
$$  dX_t = r X_t dt  + \sigma X_t dW_t, \quad X_0 = 86.3   $$
with $r=0.03$, $\sigma = 0.05$
whereas its dynamics  in Figure~\ref{figure2} is given by 
$$  dX_t = rX_t dt + \vartheta  \frac{X_t^{\delta+1}}{\sqrt{1+X_t^2}} dW_t, \quad X_{0} = 100$$ 
with $r=0.15$, $\vartheta = 0.7$, $\delta = 0.5$.  

\end{rem}

%
For our numerical examples, we remark first that  in all examples the prices obtained by RMQ (of sizes $N_k = 300$ and $N_k=400$, $\forall k=1, \cdots,n$) stay in the confidence interval induced by the MC   price estimates.  On the other hand the prices obtained by the RMQ (of sizes $N_k=400$) method are more precise (more especially  when $\vartheta=4$ and $K$ grows away from $100$) than  those obtained by the MC method when  $M_{\rm mc} = 10^{5}$ or $10^{6}$. Consequently,   the RMQ method seems to be  more efficient  than the MC when the sample size is less than $10^{6}$.   However, when increasing the sample size to $M_{\rm mc}=10^{7}$ the two prices become closer (up to $10^{-2}$).  We also remark that, up to $10^{-2}$, the prices obtained from RMQ of both different sizes ($N_k=300$ and $N_k=400$) are equal.

\begin{rem}
We remark that when the Monte Carlo sample  size  $M_{\rm mc}=10^{6}$ (resp. $M_{\rm mc}=10^{7}$) it takes about  $1$ minute and $40$ seconds (resp. $2$ minutes and $30$ seconds) to get a price using the {\em C programming language} on the same computer described previously. Then, in this situation,  it takes much more time    to obtain a price   by MC method than  by RMQ for a fixed precision in the price approximations. 
\end{rem}


To strengthen  the previous conclusions related to the local volatility model  we compare the two methods in the Black-Scholes  framework where the stock price process evolves following the dynamics:
$$ dX_{t} = r X_t dt + \sigma X_t dW_t, \quad  X_0 =100.$$
In this setting the true prices are available and will serve us as a support for  comparisons.  The  parameters are chosen so that  the model remains close to the Pseudo-CEV model: $r=0.15$ and $\sigma \approx \vartheta X_0^{\delta-1}$.   Numerical results are printed in Tables~\ref{tabPrixPutBS} and Table~\ref{tabPrixPutBSStrike} and confirm our conclusions  on the Pseudo-CEV model. We notice that in the Black-Scholes model, the estimated prices from the RMQ (for both size choice) method are  close to the true prices (the best absolute error is of order $10^{-5}$ for a volatility $\sigma =5\%$ and the worse absolute error $2. 10^{-2}$ is achieved with high volatility: $\sigma =40\%$).  This shows the robustness of the  RMQ method even for   reasonably high values of the volatility.

\vskip 0.3cm
 
\begin{table}[htbp]
 \begin{center}
 \footnotesize
  \begin{tabular}{l*{6}{l}}
&   &      &    &    &     \\
   \hline
  $\sigma$ & \qquad   MC ($10^5$) &  \   RMQ ($N_k=300$)  & \qquad MC ($10^6$)    &      \   RMQ ($N_k=400$)     & \ \ True price  \\
  \hline  
     &      &    &    &  &   \\
 $0.05 $ & \qquad  \ \  $0.0015 $ &  \quad $0.00178$  &  \qquad \ \ $ 0.00178$   & \quad  $0.00176$ &  \quad $0.00177$    \\
   &   ${\bf [0.0012;0.0019] }$ &\quad  ${\bf 1. 10^{-5}}$  & \quad  ${\bf [0.0017;0.0019] }$  & \quad    ${\bf 1. 10^{-5}}$  &     \\
  $ 0.06$ & \qquad \ \   $0.0116$ & \quad    $0.0108$ & \qquad \quad  $0.0109$ &  \quad    $0.0109 $ &\quad \ \ $0.0112 $     \\
  &  $ {\bf [0.0104;0.0128] } $&\quad   ${\bf 4.10^{-4}}$ & $ \quad  {\bf [0.0106;0.0113] } $  &\quad    ${\bf 3.10^{-4}}$  &    \\
$ 0.07$ &  \qquad \ \ $0.0365 $ & \quad   $0.0366$ &   \qquad \quad  $0.0370$  &  \quad    $0.0369$ &\quad  \ \ $0.0373$   \\
    & $  {\bf [0.0342;0.0387] }$ & \quad   ${\bf 7.10^{-4}}$  & $  \quad    {\bf [0.0363;0.0378] }$  & \quad   ${\bf 4.10^{-4}}$   &      \\
  $0.08$    &    \qquad \ \   $ 0.0876$   &  \quad  $0.0865$ & \qquad \quad $0.0876$   & \quad   $ 0.0869$ & \quad  \ \ $0.0875$      \\
     &   ${\bf [0.0836;0.0915] }$ &\quad   ${\bf  1.10^{-3}}$ & \quad   ${\bf [0.0863;0.0888] }$ &\quad   ${\bf  6.10^{-4}}$ &    \\ 
  $ 0.09$      &  \qquad \ \   $0.1666$  &   \quad   $0.1641$  &  \qquad \quad   $0.1644$ &   \quad   $0.1647$  & \quad  \ \ $0.1654$   \\
    &   ${\bf [0.1607;0.1724] }$ &\quad   ${\bf 1.10^{-3}}$ &  \quad  ${\bf [0.1622;0.1658] }$   & \quad   ${\bf 7.10^{-4}}$ &    \\
  $0.10 $    & \qquad \quad   $ 0.269 $   &   \quad  $ 0.270$   &  \qquad \quad \  \   $0.271$  &   \quad  $ 0.271$ &\qquad      $0.272$  \\
    & \ \ \     ${\bf [0.261;0.277] }$ & \quad   ${\bf 2.10^{-3}}$  & \quad  \ \ \    ${\bf [0.271;0.273] }$ &\quad   ${\bf 1.10^{-3}}$ &       \\
   $ 0.20$     & \qquad \quad  $ 2.444 $   &   \quad $ 2.422$  & \qquad \quad  \  \  $2.431$ & \quad $ 2.424$ &\qquad $2.427$  \\
   & \ \ \   ${\bf [2.410;2.479] }$ & \quad    ${\bf 5.10^{-3}}$ &  \quad \ \ \   ${\bf [2.420;2.442] }$   &\quad   ${\bf 3.10^{-3}}$ &   \\
    $ 0.30$     & \qquad \quad  $5.455$   &  \quad $ 5.466$ &  \qquad \quad   \ \  $5.469$  &  \quad $ 5.470$ &\qquad $5.474$ \\
    & \ \ \   ${\bf [5.395;5.515] }$ & \quad   ${\bf 8.10^{-3}}$  & \quad \ \ \  ${\bf [5.450;5.549] }$ &\quad    ${\bf 4.10^{-3}}$  &    \\
     $0.40$     & \qquad \quad   $8.680$   & \quad  $ 8.785$  &  \qquad \quad \  \    $8.787$  & \quad  $ 8.790$ &\qquad $8.792$  \\
    &   \ \ \  $ {\bf [8.598;8.763] }$ & \quad   ${\bf 7.10^{-3}}$  & \quad  \ \ \  ${\bf [8.760;8.813] }$  &\quad    ${\bf 2.10^{-3}}$   &   \\
 &    &      &    &    &   \\
 \hline
  \end{tabular} 
\caption{ \label{tabPrixPutBS} \small{ (Black-Scholes  model) Comparison of the Put prices obtained from Monte Carlo (MC) simulations (followed by its size) with associated confidence intervals  and from marginal quantization  (RMQ)   method with  equal grid size allocation  $N_k=300$ or $N_k=400$, $\forall k=1,\ldots,n$,  with the associated absolute error (under each  displayed RMQ price)   w. r. t. the true price. The parameters are:  $n=120$; $T=1$; $r =0.15$;  $K=100$; $X_{0}=100$  for varying values of $\sigma$.} }
\end{center}
\end{table}

\vskip 0.3cm

\begin{table}[htbp]
 \begin{center}
 \footnotesize
 \begin{tabular}{l*{6}{l}}
    
  \hline
  $K$ & \qquad  \   MC ($10^5$)  &   RMQ $(N_k=300)$ & \qquad \ \  MC ($10^6$)    &       RMQ $(N_k=400)$    &  \ \ True price  \\
  \hline  
     &      &    &    &  &   \\
 $100$  & \qquad \quad   $8.680$   &  \quad $8.785$  &  \qquad \quad \  \ \    $8.787$  & \quad $8.790$ &     \qquad $8.792$    \\
  &   \quad \  $ {\bf [8.598;8.763] }$ & \quad  ${\bf 7. 10^{-3}}$     & \quad \ \ \     ${\bf [8.760;8.813] }$  &   \quad  ${\bf 2. 10^{-3}}$      &   \\
  $ 105$ & \qquad \ \   $10.805$ &\quad  $10.740$    & \qquad \quad  \ $10.739$ &  \quad  $10.744$ &\quad \ \ $10.750 $   \\
  & \quad \  $ {\bf [10.71;10.90] } $& \quad  ${\bf 1.10^{-2}}$    &   \quad  \ \ \ ${\bf [10.71;10.90] } $  & \quad  ${\bf 6.10^{-3}}$    &   \\
$ 110$ &  \qquad \quad  $12.86 $ & \quad  $12.90$  &  \qquad \quad  \ \ \  $12.89$  &  \quad  $12.90$ &\qquad   $12.91$   \\
   & \quad \ $  {\bf [12.76;12.96] }$ & \quad   ${\bf 1.10^{-2}}$  &  \quad  \ \ \    $  {\bf [12.86;12.93] }$  &   \quad   ${\bf 1.10^{-2}}$  &  \\
 $ 115 $   &    \qquad \quad $ 15.29$   &  \quad   $ 15.25$  &  \qquad \quad \ \  \ $15.24$   & \quad   $ 15.26$ & \qquad   $15.27$       \\
    &  \quad \ ${\bf [15.18;15.40] }$ &   \quad  ${\bf  1.10^{-2}}$  &  \quad \ \ \   ${\bf [15.21;15.28] }$ & \quad   ${\bf  1.10^{-2}}$ &  \\ 
  $ 120$      &  \qquad \quad  $17.66$   &  \quad  $17.80$  &  \qquad \quad  \ \ \   $17.81$ &   \quad  $17.79$  & \qquad   $17.81$  \\
    &  \quad \  ${\bf [17.54;17.79] }$ &  \quad ${\bf 1.10^{-2}}$   &  \quad \ \ \    ${\bf [17.78;17.85] }$   & \quad   ${\bf 1.10^{-2}}$  &   \\
  $125$     & \qquad \quad    $ 20.56 $   &  \quad   $ 20.50$ & \qquad \quad \ \  \ $20.50$  &   \quad  $ 20.50$ &\qquad      $20.52$  \\
 & \quad \    ${\bf [20.43;20.69] }$ &   \quad ${\bf 2.10^{-2}}$  & \quad \   \ \  ${\bf [20.46;20.54] }$ &  \quad  ${\bf 1.10^{-2}}$   &  \\
    $130$     & \qquad \quad   $ 23.28 $   &  \quad  $23.37 $   & \qquad \quad   \ \  \  $23.37$ & \quad $ 23.37$ &\qquad $23.39$  \\
   & \quad \    ${\bf [23.14;23.42] }$ &  \quad  ${\bf 2.10^{-2}}$ & \quad \   \ \  ${\bf [23.34;23.43] }$ &  \quad   ${\bf 2.10^{-2}}$  &  \\
     &      &    &    & &   \\
 \hline
  \end{tabular} 
\caption{ \label{tabPrixPutBSStrike} \small{ (Black-Scholes  model) Comparison of the Put prices obtained from Monte Carlo (MC) simulations (followed by the size of the MC between  brackets) with associated confidence intervals  and from the marginal quantization  (RMQ)   method with equal grid size allocation  $N_k=300$ or $N_k=400$, $\forall k=1,\ldots,n$,  with the associated absolute error (under each  displayed RMQ price)  w. r. t. the true price. The parameters are:   $n=120$; $T=1$; $r =0.15$; $\sigma=40\%$; $X_{0}=100$ for varying values of $K$.} }
\end{center}
\end{table}

 \begin{figure}[htpb]
 \begin{center}
  \!\includegraphics[width=14.5cm,height=9.0cm]{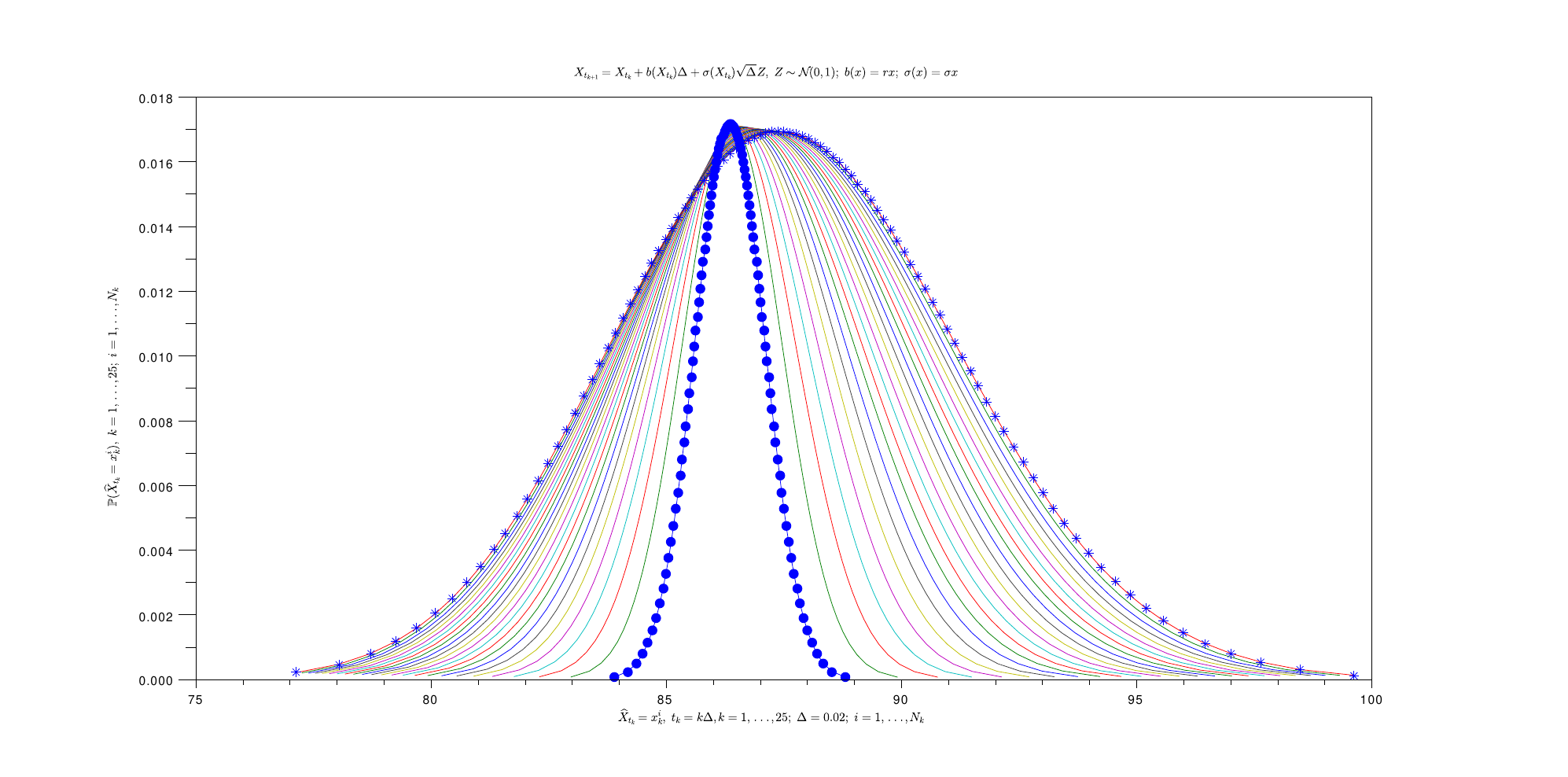}	
  \caption{\footnotesize  ("Black-Scholes model ") $d X_t = rX_t dt + \sigma X_t dW_t$, $X_0=86.3$, $r=0.03$, $\sigma = 0.05$. Abscissa axis: the optimal grids, $\widehat X_{t_k} = x_k^i$, $t_k = k\Delta$, $\Delta = 0.02$,  $k=1,\ldots,25$, $i=1,\ldots,N_k$. Ordinate axis: the associated weights, $\mathbb P(\widehat X_{t_k} = x^i_k)$, $k=1,\ldots,25$, $i = 1,\ldots,N_k$.  $\widehat{X}_{t_1}$ is depicted in  dots '$\bullet$', $\widehat X_{t_{25}}$ is represented by the symbol  '{\bf *}', $t_1 = 0.02$ and $t_{25} = 0.5$ and the remaining in continuous line} 
  \label{figure1}
  \end{center}
\end{figure}

 \begin{figure}[htpb]
  \begin{center}
    \!\includegraphics[width=14.5cm,height=9.0cm]{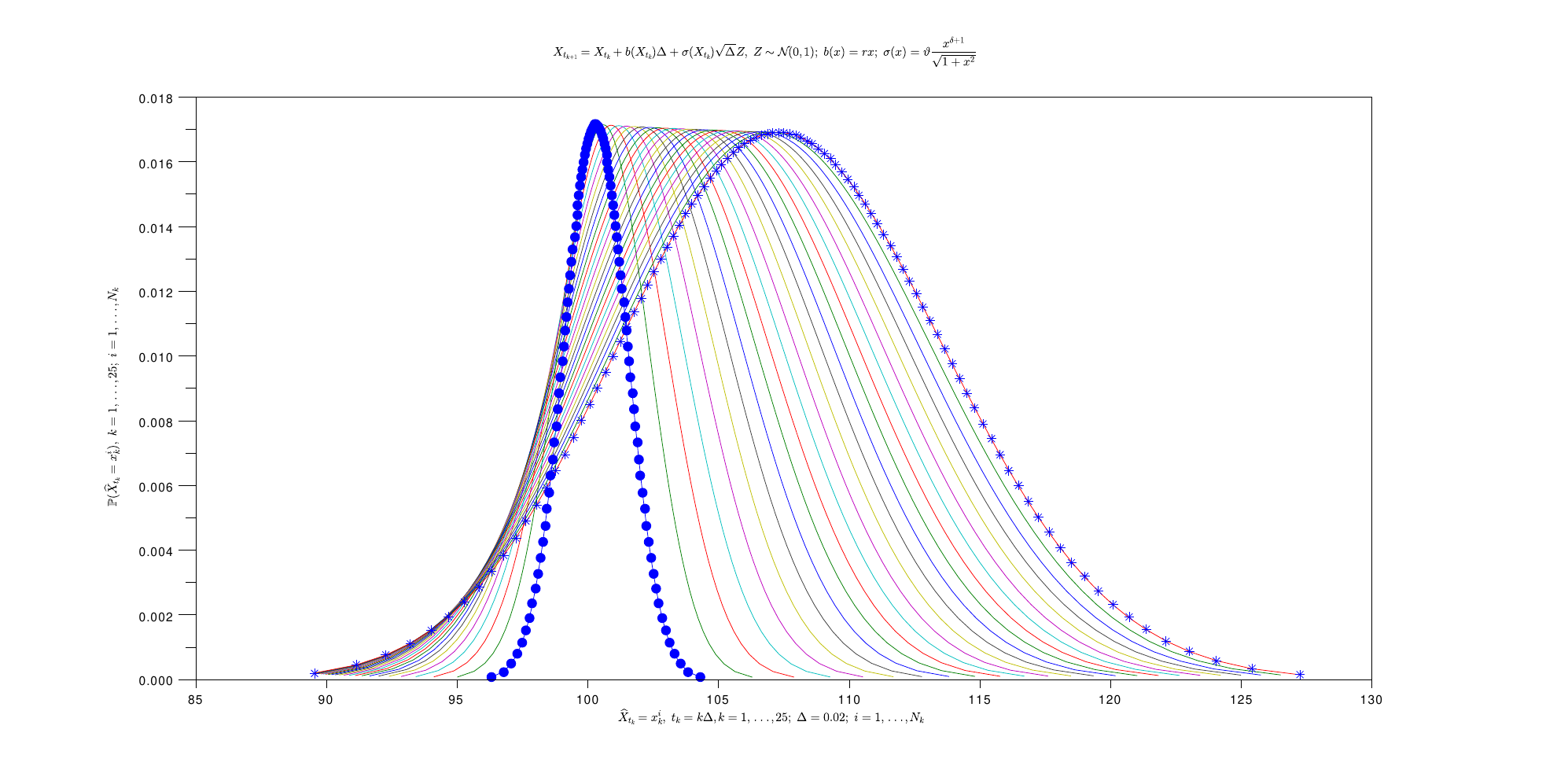}
  \caption{\footnotesize  ("Pseudo-CEV model") $dX_t = rX_t dt + \vartheta( X_t^{\delta+1}/ (1+X_t^2)^{-1/2} )dW_t$, $X_{0} = 100$, $r=0.15$, $\vartheta=0.7$, $\delta=0.5$. Abscissa axis: the optimal grids, $\widehat X_{t_k} = x_k^i$, $t_k = k\Delta$, $\Delta = 0.02$,  $k=1,\ldots,25$, $i=1,\ldots,N_k$. Ordinate axis:  the associated weights.  $\widehat{X}_{t_1}$ is depicted in  dots '$\bullet$', $\widehat X_{t_{25}}$ is represented by the symbol  '{\bf *}', $t_1 = 0.02$ and $t_{25} = 0.5$ and the remaining in continuous line.} 
  \label{figure2}
   \end{center}
\end{figure}

\newpage 
\begin{appendix}

\section*{Appendix}
The proof of Lemma~\ref{PropPrinc}  needs an additional result   given below as a technical Lemma.
\vskip 0.3cm

\noindent {\bf Lemma.}  Let $a\!\in \mathbb R^d$ et let $p\!\in [2,3]$.  Then, 
\begin{equation}   \label{LemPropPrinc}
\forall u\!\in \mathbb R^d,\quad \vert  a + u \vert^p  \le \vert a \vert^p + p \vert a \vert^{p-2} (a \vert u) + \frac{p(p-1)}{2} \big(\vert a \vert^{p-2} \vert u \vert^2 + \vert u \vert^p  \big) .
\end{equation}
 
 \vskip 0.3cm
 
  \begin{proof}[{\bf Proof.}] Define the function   $g(u) = \vert a+u  \vert^p$, $u\!\in \mathbb R^d$.  We have (denoting by $u^{\star}$ the transpose of the the column vector $u \!\in \mathbb R^d$),
  \[
  \nabla g(u) = p \vert  a + u \vert^{p-1}  \frac{a+u}{\vert a + u\vert} \quad \textrm{and} \quad  \nabla^2 g(u) = p (p-2) \vert  a + u \vert^{p-2} \frac{(a+u) (a+u)^{\star} }{\vert  a+u  \vert^2}  +p\vert  a + u \vert^{p-2}I_d  .
  \]
where $I_d$ is the identity on $\mathbb R^d$.  It follows from the Taylor-Lagrange formula that 
   \setlength\arraycolsep{3pt}
  \begin{equation*}
  \vert  a + u  \vert^p    =   \vert a  \vert^p + p \vert a  \vert^{p-2} (a \vert u) + \frac{p(p-2)}{2}    \vert a + \xi \vert^{p-2}  \frac{(a+\xi|u)^2}{\vert  a+\xi  \vert^2}   + \frac p2 \vert  a + \xi \vert^{p-2}  \vert u \vert^2,
  \end{equation*}
where $(\cdot \vert \cdot )$ stands for the inner product  and     $\xi  = \lambda_{u} u$,  $\lambda_u\!\in [0,1]$. Moreover $ \frac{(a+\xi|u)^2}{\vert  a+\xi  \vert^2}  \le   \vert u \vert^2 
$ owing to  Cauchy-Schwarz  inequality. Hence
   \begin{eqnarray*}
  \vert  a + u  \vert^p    &\le &    \vert a  \vert^p + p \vert a  \vert^{p-2} (a \vert u) + \frac{p(p-2)}{2}   \vert a + \xi \vert^{p-2}  \vert  u  \vert^2   + \frac p 2\vert  a + \xi \vert^{p-2}  \vert u \vert^2 \\
  & \le &  \vert a  \vert^p + p \vert a  \vert^{p-2} (a \vert u) + \frac{p(p-1)}{2}   \vert a + \xi \vert^{p-2}  \vert  u  \vert^2. 
  \end{eqnarray*}
Then, the result follows since  $ \vert a + \xi \vert^{p-2}  \le \vert a \vert^{p-2} + \vert u\vert^{p-2}$ since $p-2\!\in [0,1]$) and   $\vert \xi \vert \le \vert   u \vert$.
  \end{proof}
 We are now in position to prove Proposition~\ref{PropPrinc}.
 
 \begin{proof}[{\bf Proof.} {\it (of  Lemma~\ref{PropPrinc}.)}]  The proof will be split  into three  steps. 
 
\noindent  {\sc Step 1}.  Let $A$ be a $d {\small \times} q$-matrix.  We prove that for any random variable $Z$ such that  $\mathbb E (Z) =0$ and  $Z\!\in L^p(\Omega,{\cal A}, \mathbb P)$
\[
\mathbb E  \vert a + \sqrt{\Delta} A Z \vert^p   \le    \Big(1+  \frac{(p-1)(p-2)}{2}  \Delta \Big) \vert a \vert^p   +  \Delta \Big( 1 + p + \Delta^{\frac{p}{2}-1} \Big)  \Vert A  \Vert^p \mathbb E \vert  Z \vert^p,
\]
where   $\Vert A \Vert^2  = {\rm Tr} (A A^{\star})$.  In fact, it follows from Equation  ~\eqref{LemPropPrinc} that  
\[
\vert a + \sqrt{\Delta} A Z \vert^p  \le \vert a \vert^p + p  \Delta^{\frac{1}{2}} \vert a \vert^{p-2}  (a \vert A Z) + \frac{p(p-1)}{2}\big( \vert a \vert^{p-2}  \Delta  \vert  A Z \vert^2  + \Delta^{\frac{p}{2}}  \vert  A Z \vert^p  \big). 
\]
 Applying Young's inequality with conjugate  exponents $p' = \frac{p}{p-2}$ and $q' = \frac{p}{2}$, we get 
 \[
  \vert a \vert^{p-2}  \Delta  \vert  A Z \vert^2  \le \Delta \Big(  \frac{\vert a \vert^p}{p'} + \frac{\vert  A Z \vert^p}{q'} \Big),
 \]
 which leads to 
  \begin{eqnarray*}
 \vert a + \sqrt{\Delta} A Z \vert^p &  \le &  \vert a \vert^p + p  \Delta^{\frac{1}{2}} \vert a \vert^{p-2}  (a \vert A Z) + \frac{p(p-1)}{2}\Big(  \frac{\Delta}{p'}  \vert a \vert^{p} + \Big(\frac{\Delta}{q'} + \Delta^{\frac{p}{2}} \Big)   \vert  A Z \vert^p   \Big)  \\
 &  \le & \vert a \vert^p \Big( 1 +  \frac{p(p-1)}{2 p'} \Delta \Big) + p  \Delta^{\frac{1}{2}} \vert a \vert^{p-2}  (a \vert A Z)  +  \Delta \Big( \frac{p(p-1)}{2 q'} + \Delta^{\frac{p}{2}-1} \Big)   \vert  A Z \vert^p.
   \end{eqnarray*}
Taking the expectation    yields  (owing to the fact that $\mathbb E(Z) = 0$)
\[
\mathbb E  \vert a + \sqrt{\Delta} A Z \vert^p   \le     \Big( 1 +  \frac{(p-1)(p-2)}{2 } \Delta \Big)  \vert a \vert^p   +  \Delta \Big(1+ p + \Delta^{\frac{p}{2}-1} \Big)  \mathbb E \vert  A Z \vert^p.
\]
As a consequence, we get 
\[
\mathbb E  \vert a + \sqrt{\Delta} A Z \vert^p   \le    \Big( 1 +  \frac{(p-1)(p-2)}{2 } \Delta \Big)   \vert a \vert^p    +  \Delta \Big( 1+p + \Delta^{\frac{p}{2}-1} \Big)  \Vert A  \Vert^p \mathbb E \vert  Z \vert^p.
\]

\noindent {\sc Step 2}.  Keeping in mind the result of the first step and setting  for every $ t\!\in [0,T]$ and  $ x\!\in  \mathbb R^d$,  $a:=x+\Delta b(t, x)$ and   $A := \sigma(t,x)$, we get  (owing to the linear growth assumption on the coefficients of the diffusion process)
 \[
 \vert a \vert  \le  \vert  x \vert (1+L  \Delta ) +L \Delta   \quad \textrm{and} \quad  \Vert  A \Vert^p  \le L_p  (1 +\vert x \vert^p),
 \]
 where $L_p =2^{p-1} L^p$.   It follows that  (keep in mind that $p \!\in (2,3]$)
  \setlength\arraycolsep{3pt}
   \begin{eqnarray*} 
   \vert  a  \vert^p  & \le & (1+2 L \Delta )^p     \Big(   \frac{1+ L \Delta }{1 + 2 L \Delta } \vert x \vert   + \frac{L \Delta }{1 + 2 L \Delta } \Big)^p   \\
   & \le &   (1+2 L \Delta )^p     \Big(   \frac{1+ L \Delta }{1 + 2 L \Delta } \vert x \vert^{p}   + \frac{L \Delta }{1 + 2 L \Delta } \Big) \\
   & \le & (1+2 L \Delta )^p    \vert x \vert^{p}   +  (1+2 L \Delta )^{p-1}  L \Delta .
   \end{eqnarray*} 
 Then, we derive
  \begin{eqnarray*} 
   \mathbb E  \vert a + \sqrt{\Delta} A Z \vert^p  & \le &    \Big( 1 +  \frac{(p-1)(p-2)}{2 } \Delta \Big)    (1+ 2 L \Delta )^p \vert x \vert^p  \\
   & & +   \Big( 1 +  \frac{(p-1)(p-2)}{2 } \Delta \Big)    (1+ 2 L \Delta )^{p-1} L  \Delta  \\
   & &  +  \Delta L_p  \Big( 1 + p + \Delta^{\frac{p}{2}-1} \Big)  (1+\vert x \vert^p) \, \mathbb E \vert  Z \vert^p.
   \end{eqnarray*} 
Using the inequality $ 1+ u \le e^{u}$, for every $u\!\in \mathbb R$, we finally get 
 \begin{equation*} 
   \mathbb E  \vert a + \sqrt{\Delta} A Z \vert^p   \le     \Big( e^{ \kappa_p \Delta}  + K_p \Delta  \Big)  \vert x \vert^p  +  \big(  e^{\kappa_p  \Delta } L + K_p \big) \Delta, 
   \end{equation*} 
where $\kappa_p := \Big(\frac{(p-1)(p-2)}{2 } + 2 p L \Big)$  and $K_p := L_p \Big( 1 + p + \Delta^{\frac{p}{2}-1} \Big)   \mathbb E \vert  Z \vert^p$.

\noindent {\sc Step 3}.  Now, owing to the previous step and to the fact that for every $k=1, \ldots,n$,   $Z_{k}$ is independent from $\widehat X_{k-1}$,  we have 
 \begin{eqnarray*} 
 \mathbb E\vert  \widetilde X_k \vert^p   & = & \mathbb E \big[\mathbb E (\vert  {\cal E}_k(\widehat X_{k-1},Z_k) \vert^p \vert \widehat X_{k-1}) \big] \\
 & \le & \big( e^{ \kappa_p \Delta}  + K_p \Delta  \big)  \mathbb E  \vert \widehat X_{k-1} \vert^p  +  \big(  e^{\kappa_p  \Delta } L + K_p \big) \Delta.
  \end{eqnarray*} 
 Since  by construction, $\widehat X_{k}$ is a stationary quantizer (with respect to  $\widetilde X_{k}$) for every $k=0,\ldots,n$, we get    
   \begin{eqnarray*} 
 \mathbb E\vert  \widetilde X_k \vert^p   & = &    \big( e^{ \kappa_p \Delta}  + K_p \Delta  \big)  \mathbb E \big\vert \mathbb E ( \widetilde X_{k-1} \vert   \widehat X_{k-1}) \big\vert^p  +  \big( e^{\kappa_p  \Delta } L + K_p  \big)\Delta  \\
 & \le &   \big( e^{ \kappa_p \Delta}  + K_p \Delta  \big)  \mathbb E \big( \mathbb E ( \vert \widetilde X_{k-1} \vert^p  \vert   \widehat X_{k-1}) \big)  +  \big(e^{\kappa_p  \Delta } L + K_p \big) \Delta       \qquad (\textrm{Jensen's  inequality}) \\
 & =  &  \big( e^{ \kappa_p \Delta}  + K_p \Delta  \big)  \mathbb E \vert \widetilde X_{k-1} \vert^p   +  \big( e^{\kappa_p  \Delta } L + K_p  \big) \Delta .
  \end{eqnarray*} 
  We show by induction  that  for every $k=1,\ldots,n$,
  \begin{eqnarray*}
  \mathbb E\vert  \widetilde X_k \vert^p  &  \le&   \big( e^{ \kappa_p \Delta}  + K_p \Delta  \big)^k  \mathbb E \vert \widetilde X_{0} \vert^p   +   \big( e^{\kappa_p  \Delta} L + K_p \big) \Delta   \sum_{j=0}^{k-1} \big(  e^{ \kappa_p \Delta}  + K_p \Delta  \big)^j  \\
  & \le & e^{\kappa_p  k \Delta } \big( 1  + K_p \Delta e^{-\kappa_p \Delta }  \big)^k  \vert x_{0} \vert^p   +   \big( e^{\kappa_p  \Delta} L + K_p \big) \Delta   \sum_{j=0}^{k-1} e^{\kappa_p j \Delta }  \big( 1  + K_p \Delta e^{-\kappa_p \Delta }  \big)^j .
  \end{eqnarray*}
Using the inequality $1+u  \le e^u$, for every  $u\!\in \mathbb R$, yields 
 \begin{eqnarray*}
  \mathbb E\vert  \widetilde X_k \vert^p  & \le & e^{\kappa_p k \Delta } \big( 1  + K_p \Delta  \big)^k  \vert x_{0} \vert^p   +   \big(e^{\kappa_p \Delta } L + K_p \big) \Delta   \sum_{j=0}^{k-1} e^{\kappa_p j  \Delta }  \big( 1  + K_p \Delta   \big)^j \\
  & \le & e^{(\kappa_p  + K_p) k  \Delta }   \vert  x_{0} \vert^p   +   \big( e^{\kappa_p  \Delta }L + K_p \big) \Delta   \sum_{j=0}^{k-1} e^{(\kappa_p + K_p) j \Delta }  \\
  & = & e^{(\kappa_p  + K_p) t_k}   \vert  x_{0} \vert^p   +   \Delta  \big( e^{\kappa_p  \Delta } L + K_p \big)  \frac{e^{(\kappa_p + K_p) t_k}-1}{e^{(\kappa_p + K_p) \Delta} -1} \\
  & \le &  e^{(\kappa_p  + K_p) t_k}   \vert  x_{0} \vert^p   +     \big( e^{\kappa_p  \Delta } L + K_p \big) \frac{ e^{(\kappa_p + K_p) t_k}-1 }{ \kappa_p + K_p }.
  \end{eqnarray*}
The last inequality follows from the fact that $e^{(\kappa_p + K_p) \Delta} -1 \ge (\kappa_p + K_p)\Delta$.
  \end{proof}

\end{appendix}

\end{document}